\newtheorem{theorem}{Theorem}[section]
\newtheorem{lemma}[theorem]{Lemma}
\theoremstyle{definition}
\newtheorem{definition}[theorem]{Definition}
\newtheorem{example}[theorem]{Example}
\newtheorem{proposition}[theorem]{Proposition}
\theoremstyle{remark}
\newtheorem{remark}[theorem]{Remark}
\numberwithin{equation}{section}
\theoremstyle{remark}
\newcommand\strongof[1]{{#1}^+}
\newcommand \partitionof[1]{\widetilde{#1}}
\newcommand\reverse[1]{{#1}^*}
\newcommand{\Qsym}{\textsl{QSym}}
\newcommand{\xx}{\mathbf{x}}
\newcommand{\rsqschur}{\mathcal{RS}}
\newcommand{\csqschur}{\mathcal{CS}}
\newcommand{\kone}{\mathcal{K}_1}
\newcommand{\ktwo}{\mathcal{K}_2}
\DeclareMathOperator{\cont}{cont}
\definecolor{gr90}{gray}{0.90}
\definecolor{gr75}{gray}{0.75}
\definecolor{gyblue}{cmyk}{0,0.5,0,0}
\newsavebox{\myonesquare}
\savebox{\myonesquare}{\textcolor{gr75}{\rule{12.5pt}{12.5pt}}}
\newsavebox{\mytwosquare}
\savebox{\mytwosquare}{\textcolor{gr75}{\rule{25pt}{12.5pt}}}
\newsavebox{\mythreesquare}
\savebox{\mythreesquare}{\textcolor{gr75}{\rule{37.5pt}{12.5pt}}}
\newsavebox{\myfoursquare}
\savebox{\myfoursquare}{\textcolor{gr75}{\rule{50pt}{12.5pt}}}
\newsavebox{\othertwosquare}
\savebox{\othertwosquare}{\textcolor{gr75}{\rule{25pt}{12.5pt}}}
\newcommand{\sqone}{\usebox{\myonesquare}}
\newcommand{\sqtwo}{\usebox{\mytwosquare}}
\newcommand{\sqthree}{\usebox{\mythreesquare}}
\newlength{\cellsize} \setlength{\cellsize}{18\unitlength}
\newsavebox{\cell}
\sbox{\cell}{\begin{picture}(18,18) \put(0,0){\line(1,0){18}}
\put(0,0){\line(0,1){18}} \put(18,0){\line(0,1){18}}
\put(0,18){\line(1,0){18}}
\end{picture}}
\newcommand\cellify[1]{\def\thearg{#1}\def\nothing{}%
\ifx\thearg\nothing \vrule width0pt height\cellsize depth0pt\else
\hbox to 0pt{\usebox{\cell} \hss}\fi%
\vbox to \cellsize{ \vss \hbox to \cellsize{\hss$#1$\hss} \vss}}
\newcommand\tableau[1]{\vtop{\let\\\cr
\baselineskip -16000pt \lineskiplimit 16000pt \lineskip 0pt
\ialign{&\cellify{##}\cr#1\crcr}}}
\newcommand\bigcellify[1]{\def\thearg{#1}\def\nothing{}%
\ifx\thearg\nothing
\vrule width0pt height\cellsize depth0pt\else
\hbox to 0pt{\usebox3\hss}\fi%
\vbox to 30\unitlength{
\vss
\hbox to 30\unitlength{\hss$#1$\hss}
\vss}}
\newcommand\Tableau[1]{\vtop{\let\\=\cr
\setlength\baselineskip{-16000pt}
\setlength\lineskiplimit{16000pt}
\setlength\lineskip{0pt}
\halign{&\bigcellify{##}\cr#1\crcr}}}
\newcommand\bas[1]{\omit \vbox to \cellsize{ \vss \hbox to \cellsize{\hss$#1$\hss} \vss}}
\begin{document}

\title[A Littlewood-Richardson Type Rule]{A Littlewood-Richardson Type Rule for Row-Strict Quasisymmetric Schur Functions}

%    Only \author and \address are required; other information is
%    optional.  Remove any unused author tags.

%    author one information
\author[J. Ferreira]{Jeffrey Ferreira}
\thanks{Supported in part by NSF VIGRE grant DMS0636297.}
\address{Department of Mathematics, University of California at Davis}
%\curraddr{}
\email{jferreira@math.ucdavis.edu}
%\thanks{}
 
%    \subjclass is required.
%\subjclass[2010]{Primary }

%\date{\today}

%\dedicatory{}

\begin{abstract}
We give a Littlewood-Richardson type rule for expanding the product of a row-strict quasisymmetric Schur function and a symmetric Schur function in terms of row-strict quasisymmetric Schur functions. This expansion follows from several new properties of an insertion algorithm defined by Mason and Remmel (2010) which inserts a positive integer into a row-strict composition tableau. 
\end{abstract}

\maketitle

%\noindent\emph{keywords: {key polynomials, nonsymmetric Macdonald polynomials, Littlewood-Richardson rule, quasisymmetric functions, Schur functions, tableaux}}

\section{Introduction}

In \cite{Haglund2008Quasisymmetric}, the authors define a new basis of the algebra $\Qsym$ of quasisymmetric functions called column-strict quasisymmetric Schur functions, denoted $\csqschur_\alpha$, where $\alpha$ is a sequence of positive integers called a strong composition.  Over a fixed number of variables, the functions $\csqschur_\alpha$ are defined to be a certain positive integral sum of Demazure atoms.  Demazure atoms are related to Demazure characters and arise as specializations of nonsymmetric Macdonald polynomials when $q=t=0$ \cite{Mason2009An-explicit}. Demazure atoms were studied in \cite{Lascoux1990Keys} where the authors called them ``standard bases." The functions $\csqschur_\alpha$ over a finite number of variables were shown in \cite{Lauve2010QSym} to give a basis of the coinvariant space of quasisymmetric polynomials, thus proving a conjecture of Bergeron and Reutenauer in \cite{Bergeron_Reutenauer}.

In \cite{Haglund2009Refinements} the authors give a Littlewood-Richardson type rule for expanding the product $\csqschur_\alpha s_\lambda$, where $s_\lambda$ is the symmetric Schur function, as a nonnegative integral sum of the functions $\csqschur_\beta$.  This rule relied on a definition for $\csqschur_\alpha$ as the generating function of column-strict composition tableaux, which are certain fillings with positive integers of strong composition shape $\alpha$. These column-strict composition tableaux are defined by imposing three relations among certain sets of entries in the fillings of $\alpha$.  The proof of the Littlewood-Richardson type rule in \cite{Haglund2009Refinements} utilized an analogue of Schensted insertion on tableaux, which is an algorithm in classical symmetric function theory which inserts a positive integer $b$ into a tableau $T$. The results in this paper were inspired by \cite{Haglund2009Refinements}.

In \cite{Mason2010A-Dual-Basis}, the authors provide a row-strict analogue of column-strict composition tableaux; specifically they interchange the roles of weak and strict in each of the three relations mentioned above.  One of these relations requires the fillings to decrease strictly across each row, thus the name row-strict composition tableaux.  Also contained in \cite{Mason2010A-Dual-Basis} is an insertion algorithm which inserts a positive integer $b$ into a row-strict composition tableau, producing a new row-strict composition tableau.

This article establishes several new properties of the insertion algorithm given in \cite{Mason2010A-Dual-Basis}.  These properties lead directly to a Littlewood-Richardson type rule for expanding the product $\rsqschur_\alpha s_\lambda$ as a nonnegative integral sum of the function $\rsqschur_\beta$.  The combinatorics of this rule share many similarities with the classical Littlewood-Richardson rule for multiplying two Schur functions, see \cite{Fulton1997Young-tableaux} for an example.

\section{Definitions}

\subsection{Compositions and reverse lattice words}

A \emph{strong composition} $\alpha=(\alpha_1, \ldots, \alpha_k)$ with $k$ parts is a sequence of positive integers, a \emph{weak composition} $\gamma=(\gamma_1, \ldots, \gamma_k)$ is a sequence of nonnegative integers, and a \emph{partition} $\lambda=(\lambda_1, \ldots, \lambda_k)$ is a weakly decreasing sequence of positive integers.  Let \begin{math} \reverse{\lambda}:=(\lambda_k, \lambda_{k-1}, \ldots, \lambda_1) \end{math} be the \emph{reverse of} $\lambda$, and let $\lambda^t$ denote the usual \emph{transpose of} $\lambda$. Denote by $\partitionof{\alpha}$ the unique partition obtained by placing the parts of $\alpha$ in weakly decreasing order.  Denote by $\strongof{\gamma}$ the unique strong composition obtained by removing the zero parts of $\gamma$. For any sequence $\beta=(\beta_1, \ldots, \beta_s)$ let $\ell(\beta):=s$ be the \emph{length of} $\beta$. For $\gamma$ and $\beta$ arbitrary (possibly weak) compositions of the same length $s$ we say $\gamma$ is \emph{contained in} $\beta$, denoted $\gamma \subseteq \beta$, if $\gamma_i \leq \beta_i$ for all $1 \leq i \leq s$.

A finite sequence $w=w_1w_2\cdots w_n$ of positive integers with largest part $m$ is called a \emph{reverse lattice word} if in every prefix of $w$ there are at least as many $i$'s as $(i-1)$'s for each $1< i \leq m$.  The \emph{content} of a word $w$ is the sequence $\cont(w)=(\cont(w)_1, \ldots, \cont(w)_m)$ with the property that $\cont(w)_i$ equals the number of times $i$ appears in $w$.  A reverse lattice word is called \emph{regular} if $\cont(w)_1 \neq 0$.  Note that if $w$ is a regular reverse lattice word, then $\cont(w)=\reverse{\lambda}$ for some partition $\lambda$.

\subsection{Diagrams and fillings}

To any sequence $\alpha$ of nonnegative integers we may associate a \emph{diagram}, also denoted $\alpha$, of left justified boxes with $\alpha_i$ boxes in the $i$th row from the top.  In the case $\alpha=\lambda$ is a partition, the diagram of $\lambda$ is the usual Ferrers diagram in English notation.  Given a diagram $\alpha$, let $(i,j)$ denote the box in the $i$th row and $j$th column.

Given two sequences $\gamma$ and $\alpha$ of the same length $s$ such that $\gamma \subseteq \alpha$, define the \emph{skew diagram} $\alpha/\gamma$ to be the array of boxes that are in $\alpha$ and not in $\gamma$.  The boxes in $\gamma$ are called the \emph{skewed boxes.}  For each skew diagram in this article an extra column, called the \emph{$0$th column}, with $s$ boxes will be added strictly to the left of each existing column.

A \emph{filling} $U$ of a diagram $\alpha$ is an assignment of positive integers to the boxes of $\alpha$.  Given a filling $U$ of $\alpha$, let $U(i,j)$ be the entry in the box $(i,j)$.  A \emph{reverse row-strict tableau}, or RRST, $T$ is a filling of partition shape $\lambda$ such that each row strictly decreases when read left to right and each column weakly decreases when read top to bottom.  If $\lambda$ is a partition with $\lambda_1=m$, then let $T_\lambda$ be the tableau of shape $\lambda$ which has the entire $i$th column filled with the entry $(m+1-i)$ for all $1\leq i \leq m$.

A filling $U$ of a skew diagram $\alpha/\gamma$ is an assignment of positive integers to the boxes that are in $\alpha$ and not in $\gamma$. We follow the convention that each box in the $0$th column and each skewed box is assigned a virtual $\infty$ symbol.  With this convention, an entry $U(i,j)$ may equal $\infty$. Given two boxes filled with $\infty$, if they are in the same row we define these entries to strictly decrease left to right, while two such boxes in the same column are defined to be equal. 

The \emph{column reading order} of a (possibly skew) diagram is the total order $<_{col}$ on its boxes where $(i,j) <_{col} (i^\prime, j^\prime)$ if $j<j^\prime$ or ($j=j^\prime$ and $i>i^\prime$).  This is the total order obtained by reading the boxes from bottom to top in each column, starting with the left-most column and working rightwards. If $\alpha$ is a diagram with $k$ rows and longest row length $m$, it will occasionally be convenient to define this order on all cells $(i,j)$, where $0\leq i \leq k$ and $1 \leq j \leq m+2$, regardless of whether the cell $(i,j)$ is a box in $\alpha$.  The \emph{column reading word} of a (possibly skew) filling $U$ is the sequence of integers $w_{col}(U)$ obtained by reading the entries of $U$ in column reading order, where we ignore entries from skewed boxes and entries in the $0$th column.

The following definition first appeared in \cite{Mason2010A-Dual-Basis}.

\begin{definition}{\label{def_RCT}}
Let $\alpha$ be a strong composition with $k$ parts and largest part size $m$.  A {\it{row-strict composition tableau}} (RCT) $U$ is a filling of the diagram $\alpha$ such that
\begin{enumerate}
\item The first column is weakly increasing when read top to bottom.
\item Each row strictly decreases when read left to right.
\item \textbf{Triple Rule:} Supplement $U$ with zeros added to the end of each row so that the resulting filling $\hat{U}$ is of rectangular shape $k \times m$.  Then for $1 \leq i_1 < i_2 \leq k$ and $2 \leq j \leq m$,
\begin{equation*}
\left( \hat{U}(i_2,j) \neq 0 \text{ and } \hat{U}(i_2,j)>\hat{U}(i_1,j) \right)\Rightarrow \hat{U}(i_2,j) \geq \hat{U}(i_1, j-1).
\end{equation*}
\end{enumerate}
\end{definition}

If we let $\hat{U}(i_2,j)=b$, $\hat{U}(i_1,j)=a$, and $\hat{U}(i_1, j-1)=c$, then the Triple Rule ($b\neq 0$ and $b>a$ implies $b \geq c$) can be pictured as

\[
\begin{array}{ccc} \vspace{6pt}
\tableau{   c & a   \\  & \bas{{\vdots}} }  
 \\
 \tableau{ & b}
\end{array}.
\]

In addition to the triples that satisfy Definition \ref{def_RCT}, we also have a notion of inversion triples. Inversion triples were originally introduced by Haglund, Haiman, and Loehr in \cite{Haglund2005A-combinatorial} to describe a combinatorial formula for symmetric, and later nonsymmetric, Macdonald polynomials.  In the present context inversion triples are defined as follows. Let $\gamma$ be a (possibly weak) composition and let $\beta$ be a strong composition with $\gamma \subseteq \beta$.  Let $U$ be some arbitrary filling of $\beta/\gamma$.  A \emph{Type A triple} is a triple of entries
\begin{displaymath}
U(i_1, j-1)=c, U(i_1, j)=a, U(i_2, j)=b
\end{displaymath}
in $U$ with $\beta_{i_1} \geq \beta_{i_2}$ for some rows $i_1 < i_2$ and some column $j> 0$.  A \emph{Type B triple} is a triple of entries
\begin{displaymath}
U(i_1, j)=b, U(i_2, j)=c, U(i_2, j+1)=a
\end{displaymath}
in $U$ with $\beta_{i_1}<\beta_{i_2}$ for some rows $i_1<i_2$ and some column $j\geq 0$. A triple of either type A or B is said to be an \emph{inversion triple} if either $b \leq a < c$ or $a < c \leq b$.  Note that triples of either type may involve skewed boxes or boxes in the $0$th column. Type A and Type B triples can be visualized as

\[
\begin{array}{cc} \vspace{6pt}
\text{Type A} & \text{Type B} \\

\begin{array}{ccc} \vspace{6pt}
\tableau{   c & a   \\  & \bas{{\vdots}} }  
 \\
 \tableau{ & b}
\end{array}
&
\begin{array}{ccc} \vspace{6pt}
\tableau{   b &    \\  \bas{{\vdots}} & }  
 \\
 \tableau{c & a}
\end{array}

\end{array}.
\]

Central to the main theorem of this paper is the following definition.

\begin{definition}{\label{LRskewCT}}
Let $\beta$ and $\alpha$ be strong compositions. Let $\gamma$ be some (possibly weak) composition satisfying $\strongof{\gamma}=\alpha$ and $\gamma \subseteq \beta$. A {\it{Littlewood-Richardson skew row-strict composition tableau}} $S$, or {\it{LR skew RCT}}, of shape $\beta/\alpha$ is a filling of a diagram of skew shape $\beta/\gamma$ such that: 
\begin{enumerate}
\item Each row strictly decreases when read left to right.
\item Every Type A and Type B triple is an inversion triple.
\item The column reading word of $S$, $w_{col}(S)$, is a regular reverse lattice word.
\end{enumerate}
\end{definition}

Note that in Definition \ref{LRskewCT} the shape of an LR skew RCT is $\beta/\alpha$ although we refer to a filling of $\beta/\gamma$.

\begin{example}{\label{UandS}}
Below is a RCT, $U$, of shape $(1,3,2,2)$, and a LR skew RCT, $S$, of shape $(1,2,3,1,5,3)/(1,3,2,2)$ with $w_{col}(S)=4433421$.

\begin{picture}(100,100)(-75,-10)
\put(100,50){U=}
\put(125,70){\tableau{ 1 \\
	      4 & 3 & 2 \\
	      5 & 4 \\
	      5 & 3
	      }}
\put(200, 40){S=}
\put(245,72){\sqone}
\put(245,36){\sqthree}
\put(245,0){\sqtwo}
\put(245,-18){\sqtwo}
\put(222,72){\tableau{ \bas{\infty} &\infty \\
    \bas{\infty} & 4& 3  \\
     \bas{\infty} & \infty & \infty & \infty  \\ 
    \bas{\infty} & 4  \\
    \bas{\infty} & \infty & \infty &4&2&1 \\ 
    \bas{\infty} & \infty & \infty  & 3
}}
\end{picture}
\end{example}

\subsection{Generating functons}

The content of any filling $U$ of partition or composition shape, denoted $\cont(U)$, is the content of its column reading word $w_{col}(U)$.  To any filling $U$ we may associate a monomial 
\begin{equation*}
\xx^U=\prod_{i \geq 1} x_i^{\cont(U)_i}.
\end{equation*}  

The algebra of symmetric functions $\Lambda$ has the Schur functions $s_\lambda$ as a basis, where $\lambda$ ranges over all partitions.  The Schur function $s_\lambda$ can be defined in a number of ways.  In this article it is advantageous to define $s_\lambda$ as the generating function of reverse row-strict tableaux of shape $\lambda^t$.  That is

\begin{displaymath}
s_\lambda = \sum \xx^T
\end{displaymath}
where the sum is over all reverse row-strict tableaux $T$ of shape $\lambda^t$. See \cite{Stanley1999EC2} for many of the properties of $s_\lambda$.

The generating function of row-strict composition tableaux of shape $\alpha$ are denoted $\rsqschur_\alpha$.  That is
\begin{displaymath}
\rsqschur_\alpha=\sum \xx^U
\end{displaymath}
where the sum is over all row-strict composition tableaux $U$ of shape $\alpha$.  The generating functions $\rsqschur_\alpha$ are called \emph{row-strict quasisymmetric Schur functions} and were originally defined in \cite{Mason2010A-Dual-Basis}.  In \cite{Mason2010A-Dual-Basis} the authors show $\rsqschur_\alpha$ are indeed quasisymmetric, and furthermore the collection of all $\rsqschur_\alpha$, as $\alpha$ ranges over all strong compositions, forms a basis of the algebra $\Qsym$ of quasisymmetric functions. The authors also show that the Schur function $s_\lambda$ decomposes into a positive sum of row-strict quasisymmetric Schur functions indexed by compositions that rearrange the transpose of $\lambda$.  Specifically,
\begin{equation*}
s_\lambda=\sum_{\partitionof{\alpha}=\lambda^t} \rsqschur_\alpha .
\end{equation*}

\section{Insertion algorithms}
\label{sec:algo}

Define a \emph{two-line array} $A$ by letting
\begin{equation*}
A=\left( \begin{matrix} i_1 & i_2 & \cdots & i_n \\
				   j_1 & j_2 & \cdots & j_n 	 \end{matrix} \right)
\end{equation*}
where $i_r, j_r$ are positive integers for $1 \leq r \leq n$, (a) $i_1 \geq i_2 \geq \cdots \geq i_n$, and (b) if $i_r=i_s$ and $r\leq s$ then $j_r \leq j_s$. Denote by $\widehat{A}$ the upper sequence $i_1, i_2, \ldots, i_n$ and denote by $\widecheck{A}$ the lower sequence $j_1, j_2, \ldots, j_n$.

The classical Robinson-Schensted-Knuth (RSK) correspondence gives a bijection between two-line arrays $A$ and pairs of (reverse row-strict) tableaux $(P,Q)$ of the same shape \cite{Fulton1997Young-tableaux}.  The basic operation of RSK is Schensted insertion on tableaux, which is an algorithm that inserts a positive integer $b$ into a tableau $T$ to produce a new tableau $T^\prime$.  In our setting, Schensted insertion is restated as

\begin{definition}
Given a tableau $T$ and $b$ a positive integer one can obtain $T^\prime :=b \rightarrow T$ by inserting $b$ as follows:
\begin{enumerate}
\item Let $\tilde{b}$ be the largest entry less than or equal to $b$ in the first row of $T$.  If no such $\tilde{b}$ exists, simply place $b$ at the end of the first row.
\item If $\tilde{b}$ does exists, replace (bump) $\tilde{b}$ with $b$ and proceed to insert $\tilde{b}$ into the second row using the method just described.
\end{enumerate}
\end{definition}

The RSK correspondence is the bijection obtained by inserting $\widecheck{A}$ in the empty tableau $\emptyset$ to obtain a tableau $P$ called the \emph{insertion tableau}, while simultaneously placing $\widehat{A}$ in the corresponding new boxes to obtain a tableau $Q$ called the \emph{recording tableau}.

The authors in \cite{Mason2010A-Dual-Basis} provide an analogous insertion algorithm on row-strict composition tableaux.

\begin{definition}{\label{RCT Insertion}}{({\it{RCT Insertion}})}
Let $U$ be a RCT with longest row of length $m$, and let $b$ be a positive integer.  One can obtain $U^\prime :=U \leftarrow b$ by inserting $b$ as follows. Scan the entries of $U$ in reverse column reading order, that is top to bottom in each column starting with the right-most column and working leftwards, starting with column $m+1$ subject to the conditions:
\begin{enumerate}
\item In column $m+1$, if the current position is at the end of a row of length $m$, and $b$ is strictly less than the last entry in that row, then place $b$ in this empty position and stop.  If no such position is found, continue scanning at the top of column $m$.  
\item 
\begin{enumerate}
\item Inductively, suppose some entry $b_j$ begins scanning at the top of column $j$.  In column $j$, if the current position is empty and is at the end of a row of length $j-1$, and $b_j$ is strictly less than the last entry in that row, then place $b_j$ in this empty position and stop.  
\item If a position in column $j$ is nonempty and contains $\tilde{b}_j \leq b_j$ such that $b_j$ is strictly less than the entry immediately to the left of $\tilde{b}_j$, then $b_j$ bumps $\tilde{b}_j$ and continue scanning column $j$ with the entry $\tilde{b}_j$, bumping whenever possible. After scanning the last entry in column $j$, begin scanning column $j-1$.
\end{enumerate}
\item If an entry $b_1$ is bumped into the first column, then place $b_1$ in a new row that appears after the lowest entry in the first column that is weakly less than $b_1$.
\end{enumerate}
\end{definition}

In \cite{Mason2010A-Dual-Basis} the authors show $U^\prime=U \leftarrow b$ is a row-strict composition tableau.  The algorithm of inserting $b$ into $U$ determines a set of boxes in $U^\prime$ called the \emph{insertion path of $b$} and denoted $I(b)$, which is the set of boxes in $U^\prime$ which contain an entry bumped during the algorithm.  Not that if some entry $b_j$ bumps an entry $\tilde{b}_j$ then $b_j \geq \tilde{b}_j$; thus the sequence of entries bumped during the algorithm is weakly decreasing. We call the row in $U^\prime$ in which the new box is ultimately added the \emph{row augmented by insertion.} If the new box has coordinates $(i,1)$, then for each $r>i$, row $r$ of $U^\prime$ is said to be the \emph{corresponding row} of row $(r-1)$ of $U$. 

\begin{example} The figure below gives an example of the RCT insertion algorithm, where row $4$ is the row augmented by insertion.  The italicized entries indicated the insertion path $I(4)$.

\begin{picture}(100,100)(-90,-10)
\put(125,60){\tableau{ 1 \\
	       3 \\
	      4 & 3 & 2 \\
	      5 & 4 & 2\\
	      5 & 4
	      }}
\put(190, 20){$\leftarrow$}
\put(210, 20){$4$}
\put(230, 20){$=$}

\put(250,60){\tableau{
	1 \\
	3 \\
	4 & 3 & 2 \\
	{\bf{\it{4}}} \\
	5 & {\bf{\it{4}}} & 2 \\
	5 & {\bf{\it{4}}} }}

\end{picture}

\end{example}

We establish several new lemmas concerning RCT insertion that are instrumental in proving the main theorem of this paper in Section \ref{sec:LRrule}.

\begin{lemma}{\label{rows}}
Let $U$ be a RCT and let $b$ be a positive integer.  Then each row of $U^\prime = U \leftarrow b$ contains at most one box from $I(b)$.
\end{lemma}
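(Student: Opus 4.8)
The plan is to trace the structure of the insertion algorithm and argue that the insertion path $I(b)$ visits each row at most once. Recall that the entries of $U$ are scanned in reverse column reading order: top to bottom within a column, moving from the rightmost column leftwards. The key observation is that within a fixed column $j$, the algorithm carries a single ``bumping value'' $b_j$ down the column, and after processing column $j$ it hands off exactly one value $\tilde{b}_j$ to column $j-1$. Consequently $I(b)$ can contain at most one box in each column: in column $j+1$ a box is added (or a bump occurs) at exactly one position before scanning moves to column $j$. So the content of the lemma is really that the column-$j$ box of $I(b)$ and the column-$(j-1)$ box of $I(b)$ never lie in the same row.

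First I would set up notation: let $(i_j, j)$ be the unique box of $I(b)$ in column $j$, for $j$ ranging over the columns actually touched by the insertion, and suppose for contradiction that $i_{j} = i_{j-1} = i$ for some $j$. This means that when the value $b_{j}$ is inserted into column $j$ it lands in box $(i,j)$ — either bumping the old entry $U(i,j)$ or being placed in a newly created box at the end of row $i$ — and then the value passed to column $j-1$, which is the old entry $\tilde b_j$ bumped out of $(i,j)$ (or, in the placement case, there is no handoff and we are done), is inserted and lands in box $(i, j-1)$, bumping $U(i,j-1)$ or being placed at position $(i,j-1)$.

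The hard part will be deriving the contradiction from the bumping conditions in Definition~\ref{RCT Insertion}. The algorithm only bumps the entry $\tilde b_j$ at position $(i,j)$ when $b_j$ is strictly less than the entry immediately to the left of $\tilde b_j$, i.e.\ $b_j < U(i,j-1)$. But then, when we scan column $j-1$ with the value $\tilde b_j = U(i,j)$, box $(i,j-1)$ contains $U(i,j-1)$, and row-strictness of $U$ gives $U(i,j) < U(i,j-1)$, so $\tilde b_j = U(i,j) < U(i,j-1)$; I will need to check that the bumping rule in column $j-1$ at box $(i,j-1)$ requires $\tilde b_j$ to be strictly less than the entry at $(i,j-2)$, which together with the fact that scanning passed over box $(i,j-1)$ without acting — or the precise position at which $\tilde b_j$ is inserted — produces an inconsistency. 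I expect the cleanest route is to show that once the column-$j$ action occurs in row $i$, the value handed left is $U(i,j)$, and the row-strict inequality $U(i,j-1) > U(i,j)$ forces the column-$(j-1)$ scan to either bump strictly above row $i$ or be placed strictly above row $i$ in the first column (using that the first column is weakly increasing), never again in row $i$. In the placement subcase (1) or (2a), the new box is at the end of a row and no value is handed off at all, so there is nothing left to land in row $i$ again; and in case (3), the bumped value $b_1$ starts a brand-new row, which is not any row already containing an $I(b)$-box. Handling these last two placement subcases and the interaction with newly created rows (where the ``same row'' of $U$ versus $U'$ must be tracked via the corresponding-row relabeling) is the one genuinely fiddly point, but it is a finite case check rather than a deep argument.
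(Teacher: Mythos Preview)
Your proposal contains a genuine error. You claim that $I(b)$ can contain at most one box in each column, reasoning that ``after processing column $j$ it hands off exactly one value $\tilde{b}_j$ to column $j-1$.'' But read Definition~\ref{RCT Insertion}(2b) again: after $b_j$ bumps $\tilde{b}_j$ in column $j$, the algorithm \emph{continues scanning column $j$} with $\tilde{b}_j$, bumping whenever possible, before moving on to column $j-1$. So multiple bumps---hence multiple $I(b)$-boxes---can occur in a single column. The example in the paper just after Definition~\ref{RCT Insertion} exhibits this explicitly: the insertion path $I(4)$ contains two boxes in column $2$. Your setup (``let $(i_j,j)$ be the unique box of $I(b)$ in column $j$'') therefore collapses, and the remainder of the argument, which reduces the lemma to comparing the column-$j$ box with the column-$(j{-}1)$ box, does not cover the actual situation.

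The paper's proof takes a completely different and much shorter route. It uses the observation (stated just before the lemma) that the sequence of entries bumped along the insertion path is weakly decreasing. If two boxes of $I(b)$ lay in the same row of $U'$, say in columns $j' < j$, then the entry in column $j'$ was bumped later and so is weakly smaller than the entry in column $j$; but in $U'$ it sits strictly to the left in the same row, contradicting row-strictness. No column-by-column tracking or case analysis is needed.
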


\begin{proof}
Suppose for a contradiction that some row $i$ in $U^\prime$ contains at least two boxes from the insertion path of $b$.  Consider two of these boxes, say in columns $j$ and $j^\prime$ such that (without loss of generality) $j^\prime<j$.  Let $U^\prime(i,j)=b_1$ and $U^\prime(i, j^\prime)=b_2$.  Since $b_1$ was bumped earlier in the algorithm than $b_2$, we must have $b_2 \leq b_1$. Since $b_1$ and $b_2$ are in the same row, and $b_2$ appears to the left of $b_1$,  this contradicts row strictness of $U^\prime$.
\end{proof}

\begin{lemma}{\label{lastrow}} Let $U$ be a RCT and let $b$ be a positive integer.  Let $U^\prime = U \leftarrow b$ with row $i$ of $U^\prime$ the row augmented by insertion.  Then for all rows $r >i$ of $U^\prime$, the length of row $r$ is not equal to the length of row $i$.
\end{lemma}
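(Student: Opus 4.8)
The plan is to track the row augmented by insertion and compare it against the rows strictly below it. Let $(i,c)$ be the new box created by RCT insertion, so row $i$ of $U'$ has length $c$. We argue by contradiction: suppose some row $r > i$ of $U'$ also has length $c$. The key observation is that during RCT insertion, scanning proceeds through columns from right to left, and within each column from top to bottom; moreover an entry placed at the end of a row of length $j-1$ (creating a box in column $j$) terminates the algorithm. So at the moment the new box is placed in column $c$, the scan was proceeding down column $c$ and stopped at the position immediately after row $i$. This means that every row of length $c-1$ lying \emph{strictly above} row $i$ already had a column-$c$ entry that did not halt the scan (because $b_c$ was not strictly less than the last entry of that row, or the position was occupied and caused a bump continuing the scan), while rows below row $i$ were never reached.

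First I would make precise what it means for the scan in column $c$ to stop at row $i$: the box $(i,c)$ is empty in $U$ (equivalently, row $i$ of $U$ has length exactly $c-1$), and the quantity $b_c$ being inserted into column $c$ is strictly less than $U(i,c-1)$. Then I would examine a hypothetical row $r>i$ of length $c$ in $U'$. Since the scan in column $c$ halted at row $i$ before ever examining row $r$, row $r$ must have already had length $c$ \emph{in $U$} (it was not the augmented row, and rows other than the augmented one keep their length — or, if $r$ is a ``corresponding row'' shifted down by the new first-column box, the argument transfers to the original row in $U$). So $U$ itself has a row $i$ of length $c-1$ sitting above a row $r$ of length $c$, and we must derive a contradiction with the Triple Rule together with the stopping condition $b_c < U(i,c-1)$.

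The main obstacle is the last step: showing that the configuration ``row $i$ of length $c-1$ above row $r$ of length $c$, with the inserted value $b_c$ strictly less than $U(i,c-1)$'' is incompatible with $U$ being an RCT. Here I would invoke the Triple Rule on the triple with $\hat U(r,c)=b$, $\hat U(i,c)=0$ (so $a=0$), $\hat U(i,c-1)=U(i,c-1)$: since $b \neq 0$ and $b > 0 = a$, the Triple Rule forces $b \geq U(i,c-1)$. On the other hand, I would show $b_c \leq b$: the entry $b_c$ entering column $c$ is either the original $b$ (if nothing was bumped in columns to the right) or an entry bumped from column $c+1$; in either case, tracing the bumping chain, $b_c$ is bounded above by entries in columns $\geq c$, and in particular one can compare it with the column-$c$ entry $b = U'(r,c) = U(r,c)$ of the untouched row $r$. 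Combining $b_c \leq b$ — no wait, I would instead directly compare: the stopping condition gives $b_c < U(i,c-1) \leq b = U(r,c)$, yet row-strictness and the relative positions of rows $i$ and $r$, together with the fact that the scan should have acted in row $r$'s column-$c$ position (bumping $U(r,c)$, since $U(r,c) \geq b_c$ would not hold — rather $b_c < U(r,c)$ means no bump, but then why didn't the scan place $b_c$ there?) — the real point is that row $r$ has length $c$, not $c-1$, so position $(r,c)$ is occupied by $U(r,c)$, and the scan passes row $r$ before reaching row $i$ only if $b_c$ could not bump $U(r,c)$, i.e. $b_c > U(r,c)$ or the left-neighbor condition fails; carefully ruling out each case against $b_c < U(i,c-1) \leq U(r,c)$ yields the contradiction. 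I expect organizing this case analysis cleanly — deciding exactly which inequality the stopping rule and the Triple Rule each supply, and pinning down that row $r$'s column-$c$ cell is occupied and was scanned \emph{before} column $c$ reached row $i$ — to be the delicate part of the argument.
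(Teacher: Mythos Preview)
Your argument has a genuine gap stemming from a confusion about the scan order. In the RCT insertion algorithm the scan within each column proceeds \emph{top to bottom}. Since $i<r$, row $i$ is \emph{above} row $r$, so in column $c$ the scan reaches row $i$ first, places the new box there, and halts --- it never examines position $(r,c)$ at all. Your sentence ``the scan passes row $r$ before reaching row $i$ only if $b_c$ could not bump $U(r,c)$'' is therefore simply false, and the case analysis you sketch at the end (why didn't the scan place $b_c$ at $(r,c)$, or bump there?) is vacuous: the scan never got there.

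You do correctly extract the two ingredients $b_c < U(i,c-1)$ (stopping condition) and $U(r,c) \ge U(i,c-1)$ (Triple Rule applied to rows $i<r$ in column $c$), yielding $b_c < U(r,c)$. But this inequality alone is not a contradiction, precisely because nothing in the algorithm forces any comparison at cell $(r,c)$. The paper closes this gap with an idea you are missing: it analyzes the value $y$ that \emph{begins} scanning column $c$ (i.e.\ the value exiting the bottom of column $c+1$), and shows $y \ge U(r,c)$ by looking back at what happened in column $c+1$ at and below row $r$ --- either $y$ was already in hand at $(r,c+1)$, in which case it would have been placed there (contradiction), or $y$ was bumped from some $(s,c+1)$ with $s>r$, whence the Triple Rule in $U$ forces $y \ge U(r,c)$. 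With $y \ge U(r,c) > U'(i,c)$ established, the paper then walks down the bumping chain inside column $c$ (rows $i_0<\cdots<i_t=i$), finds the step $\ell$ where the chain value first drops below $U(r,c)$, and exhibits a Triple Rule violation in $U$ at rows $i_\ell<r$. Your proposal never looks at column $c+1$ and never traces the bumping chain in column $c$, so it has no mechanism to produce the contradiction.
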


\begin{proof}
Suppose for a contradiction that this is not the case.  Then there exists a row $r$ of $U^\prime$, $r>i$, whose length is equal to the length of row $i$.  Call this length $j$.  Since $i$ is the row in which the new cell was added then row $r$ in $U^\prime$ is the same as row $r$ in $U$, except in the case when the augmented row $i$ is of length $1$ in which case row $(r+1)$ of $U^\prime$ is the same as row $r$ of $U$.  Let $y$ be the entry that scans the top of the $j$th column.

We claim $y \geq U(r,j)$.  Suppose not.  Then $y<U(r,j)$.  When scanning column $j+1$ if the value $y$ was in hand at row $r$, we would have put $y$ in a new box with coordinates $(r, j+1)$.  Since this is not the case, $y$ was bumped from position $(s, j+1)$, $s>r$.  In this case $y=\hat{U}(s, j+1) >0=\hat{U}(r, j+1)$ with $y<\hat{U}(r,j)$.  This is a Triple Rule violation in $U$, thus $y\geq U(r,j)$.

If $j=1$ then since $y \geq U(r,j)$, $y$ would be inserted into a new row $i$ where $i>r$.  This is contrary to our assumption that the augmented row $i$ satisfies $r>i$. So we can assume $j>1$.

We must have $U(r,j)=U^\prime(r,j) \geq U(i, j-1)=U^\prime(i,j-1)$, or else $U$ would have a Triple Rule violation.  Since $U(i, j-1)=U^\prime(i, j-1)>U^\prime(i,j)$ we have $U^\prime(r,j) > U^\prime(i,j)$.

Consider now the portion of the insertion path in column $j$, say in rows $i_0 < i_1 <\ldots <i_t=i$, where $y=U^\prime(i_0, j)$.  Since $y\geq U(r,j)=U^\prime(r,j) > U^\prime(i,j)$ and since the entries in the insertion path are weakly decreasing, there is some index $\ell$, $0\leq \ell < t$, such that 
\begin{equation}{\label{star1}}
U^\prime(i_\ell, j)\geq U^\prime(r,j) > U^\prime(i_{\ell+1}, j).
\end{equation}
Since rows strictly decrease, 
\begin{equation}{\label{star2}}
U^\prime(i_\ell, j-1) > U^\prime(i_\ell, j) \geq U^\prime(r,j).
\end{equation}
Further, note that
\begin{equation}{\label{star3}}
U(i_p, j)=U^\prime(i_{p+1}, j) \text{ for all } 0\leq p < t.
\end{equation}
Now combining (\ref{star1}),(\ref{star2}), and (\ref{star3}) we get in $U$ the inequalities ${U(r,j)=U^\prime(r,j) > U^\prime(i_{\ell+1},j)= U(i_\ell, j)}$ but $U(r,j)=U^\prime(r,j)< U^\prime(i_\ell, j-1)=U(i_\ell, j-1)$, which is a Triple Rule violation in $U$.

Thus in all cases we obtain a contradiction.
\end{proof}

Consider the RCT obtained after $n$ successive insertions 
\begin{equation*}
U_n := (\cdots ((U \leftarrow b_1) \leftarrow b_2) \cdots) \leftarrow b_n
\end{equation*}
where the $b_i$ are arbitrary positive integers. Any row $i$ of $U_n$ will either consist entirely of boxes added during the successive insertions, or it will consist of some number of boxes from $U$ with some number of boxes added during the successive insertions.  In the former case row $i$ corresponds to some row $\hat{i}$ in each $U_j$ for $j > k\geq 1$, where $k$ is such that the insertion of $b_k$ adds a box in position $(\hat{i},1)$.  In the latter case row $i$ corresponds to some row $\hat{i}$ in each $U_j$ for all $0 \leq j \leq n$ where $U_0:=U$.  

As a direct consequence of Lemma \ref{lastrow} we have 

\begin{lemma}{\label{weakly longer}}
Consider $U_n$, the RCT obtained after $n$ successive insertions. Consider two rows $i$ and $i^\prime$ of $U_n$ such that $i < i^\prime$ and row $i$ is weakly longer than row $i^\prime$.  Suppose $b_{k_1}$ adds a box in position $(\hat{i},1)$ and $b_{k_2}$ adds a box in position $(\hat{i^\prime},1)$. Then $k_1<k_2$ and the corresponding row $\hat{i}$ is weakly longer than the corresponding row $\hat{i^\prime}$ in each $U_j$ for $j\geq k_2$.  
\end{lemma}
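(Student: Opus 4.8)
The plan is to reduce Lemma~\ref{weakly longer} to Lemma~\ref{lastrow} by an induction on $n$, tracking how the lengths of rows $\hat{i}$ and $\hat{i'}$ evolve through the successive insertions. First I would observe that since row $i$ is weakly longer than row $i'$ in $U_n$ and $i < i'$, Lemma~\ref{lastrow} applied to the insertion $U_{k_2-1} \leftarrow b_{k_2}$ (which augments row $\hat{i'}$) shows that at that moment no later row has the same length as row $\hat{i'}$; combined with the fact that rows only grow, this forces row $\hat{i}$ to already exist before step $k_2$, hence $k_1 < k_2$. (The edge case where $\hat{i'}$ is created at length $1$ and the row indexing shifts must be handled using the ``corresponding row'' bookkeeping set up just before the statement.)

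For the main assertion — that $\hat{i}$ stays weakly longer than $\hat{i'}$ in every $U_j$ with $j \geq k_2$ — I would argue by induction on $j$, starting from $j = n$ and working downward, or equivalently by a downward/upward induction showing the inequality is preserved at each single insertion step. The key single-step claim is: if in some $U_{j}$ row $\hat{i}$ is weakly longer than row $\hat{i'}$ with $\hat{i} < \hat{i'}$, then after inserting $b_{j+1}$ the same holds in $U_{j+1}$. Here I would use Lemma~\ref{rows} (each row gets at most one new box per insertion) together with Lemma~\ref{lastrow} applied to the step $U_j \leftarrow b_{j+1}$: the only way the inequality could be violated is if row $\hat{i'}$ receives a new box while row $\hat{i}$ does not, so that their lengths become equal. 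But if after that step rows $\hat{i}$ and $\hat{i'}$ had equal length with $\hat{i} < \hat{i'}$, and $\hat{i'}$ is at or below the augmented row, Lemma~\ref{lastrow} is contradicted; and if $\hat{i}$ is the augmented row or lies strictly above it while only $\hat{i'}$ grew, one checks directly from the scanning order of Definition~\ref{RCT Insertion} that the insertion path cannot add a box to row $\hat{i'}$ without having already either terminated in or passed through a row of length $\geq$ that of the new box in an earlier column.

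The step I expect to be the genuine obstacle is the careful case analysis of which of the two rows (if either) is augmented at a given insertion step, because Lemma~\ref{lastrow} only controls rows \emph{below} the augmented row, whereas a priori either $\hat{i}$ or $\hat{i'}$ could be the augmented row, or the augmented row could lie between them, above them, or below them. In the subcase where the augmented row lies strictly between $\hat{i}$ and $\hat{i'}$ and is created at length $1$, the indices of both $\hat{i}$ and $\hat{i'}$ shift, and I will need the bookkeeping about ``corresponding rows'' in $U_j$ to make sure the statement I am inducting on is stable. Once the single-step claim is established in all these subcases, assembling it into the full statement for all $j \geq k_2$ is a routine induction. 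I would also remark that the inequality $k_1 < k_2$ already follows as soon as the single-step claim is available, since if $k_1 \geq k_2$ then at step $k_2$ row $\hat{i}$ either does not yet exist or coincides with or lies below the just-augmented row $\hat{i'}$, each of which contradicts either $\hat{i} < \hat{i'}$ or Lemma~\ref{lastrow}.
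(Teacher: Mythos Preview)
Your downward-from-$n$ idea is exactly the paper's argument, and it works in two lines: if at some intermediate $U_j$ the row $\hat{i}$ were strictly shorter than $\hat{i'}$, then --- since the inequality does hold at $U_n$ --- there is a later step $\ell$ at which the \emph{upper} row $\hat{i}$ is augmented and, at that moment, matches the length of the lower row $\hat{i'}$; this directly contradicts Lemma~\ref{lastrow}. No case analysis on the location of the augmented row is needed, and $k_1<k_2$ falls out of the same contradiction (if $k_1>k_2$, then at step $k_1$ the newly created upper row has length $1$ while the lower row already has length $\geq 2$ by Lemma~\ref{lastrow}, and the catching-up argument finishes it).

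The trouble is that you treat the forward single-step claim as ``equivalent'' and devote most of the proposal to it, but that claim --- if $\hat{i}$ is weakly longer than $\hat{i'}$ in $U_j$ then the same holds in $U_{j+1}$ --- is false as a standalone assertion about RCT insertion. Lemma~\ref{lastrow} only constrains rows \emph{below} the augmented row, so nothing prevents an insertion from augmenting the lower row $\hat{i'}$ when the two rows are already tied: for instance, insert $2$ into the two-row RCT with rows $(2,1)$ above $(4,3)$, and the new box lands at $(2,3)$, making the lower row strictly longer. Your proposed fallback (``one checks directly from the scanning order\dots'') does not rule this out. The terminal condition at $U_n$ is doing essential work here; the paper's argument uses it to wait for the step where the \emph{upper} row is augmented, which is precisely when Lemma~\ref{lastrow} bites. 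Your first-paragraph argument for $k_1<k_2$ has the same defect: applying Lemma~\ref{lastrow} at step $k_2$ tells you nothing about $\hat{i}$, which lies \emph{above} the augmented row $\hat{i'}$.
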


\begin{proof}
Suppose for a contradiction that at some intermediate step $U_j$ row $\hat{i}$ is strictly shorter than $\hat{i^\prime}$.  Since row $i$ is weakly longer than row $i^\prime$ in $U_n$, we must have that for some $\ell$, $j <\ell$, the new box produced in the insertion of $b_\ell$ into $U_{\ell-1}$ is at the end of the corresponding row $\hat{i}$ and rows $\hat{i}$ and $\hat{i^\prime}$ have the same length.  This contradicts Lemma \ref{lastrow}.

\end{proof}

Lemma \ref{lastrow} allows us to invert the insertion process for RCT's.  More specifically, given a RCT $U^\prime$ of shape $\alpha^\prime$ we can obtain a RCT $U$ of shape $\alpha$, where $\alpha^\prime=(\alpha_1, \ldots, \alpha_i+1, \ldots, \alpha_l)$ or $\alpha^\prime=(\alpha_1, \ldots, \alpha_{i-1}, 1, \alpha_i, \ldots, \alpha_l)$, in the following way.  We can un-insert the last entry, call it $y$, in row $i$ of $\alpha^\prime$, where row $i$ is the lowest row of length $j=\alpha_i+1$ or $j=1$. Do so by scanning up columns from bottom to top and un-bumping entries $\tilde{y}$ weakly greater than $y$ whenever $y$ is strictly greater than the entry to the right of $\tilde{y}$.  After scanning a column, we move one column to the right and continue scanning bottom to top.  In the end we will have un-inserted an entry $k$ such that $U^\prime=U \leftarrow k$.

\subsection{Main Bumping Property}

As above, let $U$ be a RCT with $k$ rows and longest row length $m$.  Consider $U \leftarrow b \leftarrow c$ with $b \leq c$.  Let $b_j^i$ be the entry ``in hand" which scans the entry in the $i$th row and $j$th column of $U$ during the insertion of $b$ into $U$, where $b_j^0$ is the element that begins scanning at the top of column $j$, so $b_{m+1}^0:=b$.  If the insertion of $b$ stops in position $(i_b,j_b)$ then $b_j^i:=0$ for all positions $(i,j) <_{col} (i_b,j_b)$ in $U$.  Similarly, let $c_j^i$ be the entry ``in hand" which we compare against the entry in the $i$th row and $j$th column of $U \leftarrow b$ during the insertion of $c$ into $U \leftarrow b$, where $c_j^0$ is the element that begins scanning the top of column $j$.  Note that $c$ will begin scanning in column $m+2$, since the insertion of $b$ may end in column $m+1$.  But when $b \leq c$ we have $c_{m+1}^0=c$ regardless of where the insertion of $b$ ends. If the insertion of $c$ into $U \leftarrow b$ stops in position $(i_c, j_c)$ we let $c_j^i:=0$ for all positions $(i,j)<_{col} (i_c, j_c)$ in $U \leftarrow b$. 

Now consider $U \leftarrow b \leftarrow a$ with $b>a$.  Define $b_j^i$ as above.  Similarly, we can define $a_j^i$ to be the entry which scans the entry in the $i$th row and $j$th column of $U \leftarrow b$ during the insertion of $a$ into $U \leftarrow b$.  Define $a_j^0$ to be the entry that begins scanning at the top of the $j$th column.  Define $a_{m+2}^0:=a$.  If the insertion of $a$ stops in position $(i_a, j_a)$ then let $a_j^i:=0$ for all positions $(i,j)<_{col} (i_a,j_a)$ in $U \leftarrow b$.  

\begin{lemma}{\label{scanning values}}
Let $U$ be a RCT with $k$ rows and longest row length $m$.  Let $a<b\leq c$ be positive integers. Suppose the insertion of $b$ into $U$ creates a new box in position $(i_b, j_b)$ in $U \leftarrow b$. The scanning values $b_j^i, c_j^i, a_j^i$ have the following relations.
\begin{enumerate}
\item Consider $U \leftarrow b \leftarrow c$.
	\begin{enumerate}
	\item If $U\leftarrow b$ has the same number of rows as $U$, then $b_j^i \leq c_j^i$ for all $(i,j)$ such that $0 \leq i \leq i_b	$ when $j=j_b$ and $0 \leq i \leq k$ when $j_b < j <m+1$.
	\item If $U\leftarrow b$ has one more row than $U$, that is $j_b=1$, then 
	\[
	\begin{array}{ll}
	b_j^i \leq c_j^i & \text{ for all } 0 \leq i \leq i_b \text{ and } 1 \leq j \leq m+1, \\
	b_j^{i} \leq c_j^{i+1} & \text{ for all } i_b \leq i \leq k+1 \text{ and } 2 \leq j \leq m+1.
	\end{array}
	\]
	\end{enumerate}
\item Consider $U \leftarrow b \leftarrow a$.
	\begin{enumerate}
	\item If $U\leftarrow b$ has the same number of rows as $U$, then $b_j^i > a_{j+1}^i$ for all $(i,j)$ such that $0 \leq i \leq i_b$ when $j=j_b$ and $0 \leq i \leq k$ when $j_b < j \leq m$.
	\item If $U\leftarrow b$ has one more row than $U$, that is $j_b=1$, then 
	\[
	\begin{array}{ll}
	b_j^i > a_{j+1}^i & \text{ for all } 0 \leq i \leq i_b \text{ and } 1 \leq j \leq m, \\
	b_j^i > a_{j+1}^{i+1} & \text{ for all } i_b \leq i \leq k+1 \text{ and } 2 \leq j \leq m+1.
	\end{array}
	\]
	\end{enumerate}
\end{enumerate}
\end{lemma}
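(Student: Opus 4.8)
The plan is to prove all four inequalities by following the insertion algorithm step by step and maintaining the asserted comparison as an invariant. We treat $U\leftarrow b\leftarrow c$ (parts~1(a) and~1(b)) and $U\leftarrow b\leftarrow a$ (parts~2(a) and~2(b)) by parallel arguments. In each, picture a single ``clock'' advancing through the boxes in the order the insertion algorithm scans them — column by column from right to left, and top to bottom within a column — along which both the $b$-scan into $U$ and the second scan into $U\leftarrow b$ progress; by the remarks preceding the statement these two scans are synchronized, so that at the start $b^0_{m+1}=b$ and, when $b\le c$, also $c^0_{m+1}=c$, while when $a<b$ we have $a^0_{m+2}=a$. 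The invariant to maintain is: in part~(1), the value in hand during the $c$-scan at the current box is weakly greater than the value in hand during the $b$-scan at that box; in part~(2), the value $b^i_j$ in hand during the $b$-scan at box $(i,j)$ is strictly greater than the value $a^i_{j+1}$ in hand during the $a$-scan at box $(i,j+1)$, the one-column offset encoding the usual ``a smaller inserted element bumps strictly to the left.''

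For the base of the induction, in part~(1) the first comparison is $b^0_{m+1}=b\le c=c^0_{m+1}$ (using, as noted in the excerpt, that $c$ passes through column $m+2$ without placing because $c\ge b$), and in part~(2) it is $b^0_{m+1}=b>a=a^0_{m+2}$; if $j_b=m+1$ the range asserted in part~(2) is empty, so we may assume $b$ passes through column $m+1$. For the inductive step, consider the transition from a box to the next box in scan order. A transition from the bottom of a column to the top of the column immediately to its left leaves both values in hand unchanged, so the invariant is inherited. For a downward step within a column $j$, each scan either keeps its value in hand $v$ or, if it bumps at box $(i,j)$, replaces $v$ by the entry bumped out of $(i,j)$; whether a scan bumps at $(i,j)$ is decided by comparing $v$ against the entry in $(i,j)$ and against the entry immediately to its left in $(i,j-1)$. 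For the $b$-scan these are entries of $U$; for the second scan they are entries of $U\leftarrow b$.

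The main obstacle is exactly that $U$ and $U\leftarrow b$ differ — precisely along the boxes of the insertion path $I(b)$ — so the induction branches according to whether $(i,j)$ and/or $(i,j-1)$ lie on $I(b)$. To control this we use Lemma~\ref{rows}, that each row meets $I(b)$ in at most one box; the observation recorded after Definition~\ref{RCT Insertion} that the successive bumped entries of the $b$-insertion are weakly decreasing; and the explicit description of how $I(b)$ rearranges a column, namely that along the path in column $j$ through rows $i_0<i_1<\ldots<i_t$ one has $(U\leftarrow b)(i_{p+1},j)=U(i_p,j)$ with the incoming value $b^{i_0}_j$ installed at $(i_0,j)$. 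Combining these identifications of the entries of $U\leftarrow b$ in terms of those of $U$ and of the scanning values $b^i_j$ with the bump criterion and with the row-strictness of both $U$ and $U\leftarrow b$ (Definition~\ref{def_RCT}(2), which $U\leftarrow b$ satisfies since it is again an RCT) forces, in each branch, the required inequality between the post-step values: $b^{i+1}_j\le c^{i+1}_j$ in part~(1) and $b^{i+1}_j>a^{i+1}_{j+1}$ in part~(2). One also checks, using that the ranges in the statement lie weakly to the right of the halting column $j_b$, that a scanning value $b^i_j$ which has been zeroed out after the $b$-insertion halts at $(i_b,j_b)$ is never the left-hand side of a comparison inside the stated range, so the zeroing convention causes no trouble.

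Finally, the split into ``same number of rows'' and ``one more row'' is handled uniformly. When $j_b=1$ the $b$-insertion adds a new row at $(i_b,1)$, so every row of $U\leftarrow b$ with index $\ge i_b$ is the corresponding row of $U$ shifted down by one; re-indexing the second scan by this shift turns the offset inequalities of parts~1(b) and~2(b) into the already-established statements of parts~1(a) and~2(a). What remains is the routine but lengthy enumeration of the branches described above, and I expect that enumeration — rather than any single conceptual difficulty — to be the bulk of the work.
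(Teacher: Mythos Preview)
Your proposal is correct and matches the paper's approach: the paper also inducts along the reverse column reading order, starting from $b^0_{m+1}=b\le c=c^0_{m+1}$ (respectively $b^0_{m+1}>a=a^0_{m+2}$), and at each step splits into the four cases determined by whether the $b$-scan bumps at $(i,j)$ in $U$ and whether the second scan bumps at the corresponding position in $U\leftarrow b$; the ``routine but lengthy enumeration'' you defer is precisely what the paper writes out, and your reduction of parts~(1b) and~(2b) to (1a) and (2a) via the row shift is exactly how the paper organizes those cases as well. The one ingredient you invoke that the paper leaves implicit is Lemma~\ref{rows} (to conclude, e.g., that $(i,j+1)\notin I(b)$ once $(i,j)\in I(b)$ in Case~4 of part~(2a)); this is indeed needed and the paper uses it silently.
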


\begin{remark}
Informally, Lemma \ref{scanning values} states that when doing consecutive insertions $U \leftarrow b \leftarrow c$ or $U \leftarrow b \leftarrow a$, the scanning values created by $b$ are weakly less than the scanning values created by $c$, and the scanning values of $b$ are strictly greater than the scanning values created by $a$.  Note that $b_j^i > a_{j+1}^i$ implies $b_j^i > a_j^i$ for $(i,j)$ satisfying the conditions of Lemma \ref{scanning values} part (2).
\end{remark}

\begin{proof}
\textbf{Proof of (1a):} Let $j=m+1$ and $i=0$.  Then clearly $b_{m+1}^0=b \leq c_{m+1}^0 =c$.  Now fix the column index $j>j_b$. Suppose by induction that $b_j^p \leq c_j^p$ for all $p\leq i$.  To show $b_j^{i+1} \leq c_j^{i+1}$ consider the following cases.

\emph{Case 1:} Suppose $b_j^i$ bumps the entry $b_j^{i+1}$ in position $(i, j)$ of $U$, and $c_j^i$ does not bump in position $(i,j)$ of $U^\prime$.  In this case, $b_j^{i+1} \leq b_j^i \leq c_j^i = c_j^{i+1}$.

\emph{Case 2:}  Suppose $b_j^i$ bumps the entry $b_j^{i+1}$ in position $(i, j)$ of $U$, and $c_j^i$ bumps the entry $c_j^{i+1}=b_j^i$ in position $(i,j)$ of $U^\prime$.  Then $b_j^{i+1} \leq b_j^i =c_j^{i+1}$.

\emph{Case 3:} Suppose neither $b_j^i$ nor $c_j^i$ bump in position $(i,j)$ of their respective RCT.  Then $b_j^{i+1}=b_j^i \leq c_j^i = c_j^{i+1}$.

\emph{Case 4:} Suppose $b_j^i$ does not bump in position $(i,j)$ of $U$, but $c_j^i$ bumps $c_j^{i+1}$ in position $(i,j)$ of $U^\prime$.  Consider the following diagram which depicts row $i$ and columns $j-1$ and $j$ in each of $U$, $U \leftarrow b$, and $U \leftarrow b \leftarrow c$.

\[
\begin{array}{ccc}
U & U \leftarrow b & U \leftarrow b \leftarrow c \\
\Tableau{ d & c_j^{i+1}} & \Tableau{ \tilde{d} & c_j^{i+1} } & \Tableau{ \tilde{d} & c_j^i }
\end{array}
\]

If $d$ is bumped by $\tilde{d}$ during the insertion of $b$, then $d \leq \tilde{d} \leq b_{j-1}^0 \leq b_j^i \leq c_j^i$ which contradicts row strictness of $U \leftarrow b \leftarrow c$.  So assume $d$ does not get bumped by $\tilde{d}$, that is $d=\tilde{d}$.  We get $d= \tilde{d} > c_j^i \geq b_j^i$ and since $b_j^i$ does not bump we must have $b_j^i < c_j^{i+1}$.  But then $b_j^i =b_j^{i+1} < c_j^{i+1}$.

The argument above shows that for fixed $j$, $b_j^i \leq c_j^i$ for all $0 \leq i \leq k$.  But this immediately implies $b_{j-1}^0 \leq c_{j-1}^0$ and thus we have $b_j^i \leq c_j^i$ for all $(i,j)$ indicated in the lemma.

\textbf{Proof of (1b):} Notice that row $i+1$ in $U^\prime$ will correspond to row $i$ in $U$ for all $i_b \leq i \leq k+1$.  Since row $i_b$ has only one box in it, then $c_j^{i_b}=c_j^{i_b+1}$ for $3\leq j \leq m+1$.  So assume $j$ is fixed such that $3 \leq j \leq m+1$.  The proof for part (1a) establishes $b_j^i \leq c_j^i$ for $0 \leq i \leq i_b$, which immediately implies $b_j^{i_b} \leq c_j^{i_b+1}$.  

Now suppose by induction that $b_j^p \leq c_j^{p+1}$ for all $p$ such that $i_b\leq p \leq i$ for some $i$.  We establish $b_j^{i+1} \leq c_j^{i+2}$ by considering the following cases.

\emph{Case 1:} Suppose $b_j^{i}$ bumps the entry $b_j^{i+1}$ in position $(i, j)$ of $U$, and $c_j^{i+1}$ does not bump in position $(i+1,j)$ of $U^\prime$.  Then $b_j^{i+1} \leq b_j^{i} \leq c_j^{i+1} =c_j^{i+2}$.

\emph{Case 2:} Suppose $b_j^{i}$ bumps the entry $b_j^{i+1}$ in position $(i, j)$ of $U$, and $c_j^{i+1}$ bumps the entry $c_j^{i+2}=b_j^{i}$ in position $(i+1,j)$ of $U^\prime$.  Then $b_j^{i+1} \leq b_j^i = c_j^{i+2}$.

\emph{Case 3:} Suppose neither $b_j^{i}$ does not bump in position $(i,j)$ of $U$ and $c_j^{i+1}$ does not bump in position $(i+1,j)$ of $U^\prime$.  Then $b_j^{i+1}=b_j^{i} \leq c_j^{i+1}=c_j^{i+2}$.

\emph{Case 4:} Suppose $b_j^{i}$ does not bump in position $(i,j)$ of $U$, but $c_j^{i+1}$ bumps $c_j^{i+2}$ in position $(i+1,j)$ of $U^\prime$. Consider the following diagram which depicts columns $j-1$ and $j$ and the labelled rows of $U$, $U \leftarrow b$, and $U\leftarrow b \leftarrow c$.

\[
\begin{array}{lccc}
 & U & U \leftarrow b & U \leftarrow b \leftarrow c \\
i& \Tableau{ d & c_j^{i+2}} & \vdots & \vdots  \\
i+1& \vdots & \Tableau{ \tilde{d} & c_j^{i+2} } & \Tableau{ \tilde{d} & c_j^{i+1} }
\end{array}
\]

If $d$ is bumped by $\tilde{d}$ during the insertion of $b$, then $d \leq \tilde{d} \leq b_{j-1}^0 \leq b_j^{i} \leq c_j^{i+1}$ which contradicts row strictness of $U \leftarrow b \leftarrow c$.  So assume $d$ does not get bumped by $\tilde{d}$, that is $d=\tilde{d}$.  We get $d= \tilde{d} > c_j^{i+1} \geq b_j^{i}$ and since $b_j^{i}$ does not bump we must have $b_j^{i} < c_j^{i+2}$.  But then $b_j^{i+1} =b_j^{i} < c_j^{i+2}$.

When $j=2$, the above argument shows $b_2^i \leq c_2^i$ for all $0 \leq i \leq i_b$, which implies the insertion of $c$ cannot add a new box with entry $c_2^{i_b}$ in position $(i_b, 2)$ of $U^\prime$, otherwise $U^\prime(i_b,1) \leq b_2^{i_b} \leq c_2^{i_b}$.  So $c_2^{i_b}=c_2^{i_b+1}$ and the above argument shows $b_2^i \leq c_2^{i+1}$ for all $i_b \leq i \leq k+1$.

The case of $j=2$ implies $b_1^0 \leq c_1^0$.  The definition of insertion implies $b_1^i=b_1^0$ for all $0 \leq i \leq i_b$, and $c_1^i=c_1^0$ for all $0 \leq i \leq i_b$.  Thus, the relations in part (1b) of the lemma follow. 

\textbf{Proof of (2a)} We have $b_{m+1}^0 > a_{m+2}^0$ by assumption.  Now fix a column $j>j_b$ and assume by induction that $b_j^p > a_{j+1}^p$ for all $p\leq i$ for some $i$.  To show $b_j^{i+1} > a_{j+1}^{i+1}$ we consider the following cases.

\emph{Case 1:} Suppose $b_j^i$ bumps the entry $b_j^{i+1}$ in position $(i,j)$ of $U$, and $a_{j+1}^i$ bumps the entry $a_{j+1}^{i+1}$ in position $(i, j+1)$ of $U \leftarrow b$.  Then $U(i,j)=b_j^{i+1} > U(i,j+1)=a_j^{i+1}$ by row-strictness of $U$.

\emph{Case 2:} Suppose $b_j^i$ does not bump in position $(i,j)$ of $U$, and $a_{j+1}^i$ bumps the entry $a_{j+1}^{i+1}$ in position $(i, j+1)$ of $U \leftarrow b$.  Then $b_j^{i+1}=b_j^i > a_{j+1}^i \geq a_{j+1}^{i+1}$.

\emph{Case 3:} Suppose $b_j^i$ does not bump in position $(i,j)$ of $U$, and $a_{j+1}^i$ does not bump in position $(i, j+1)$ of $U\leftarrow b$.  Then $b_j^{i+1}=b_j^i > a_{j+1}^i = a_{j+1}^{i+1}$.

\emph{Case 4:} Suppose $b_j^i$ bumps the entry $b_j^{i+1}$ in position $(i,j)$ of $U$, and $a_{j+1}^i$ does not bump in position $(i, j+1)$ of $U\leftarrow b$. Let $U^\prime=U \leftarrow b$.  Then 
\begin{displaymath}
U^\prime(i,j)=b_j^i \geq b_j^{i+1}=U(i,j) > U^\prime(i,j+1)=U(i,j+1).
\end{displaymath}  
Because $U^\prime(i,j)=b_j^i>a_{j+1}^i$ and $a_{j+1}^i$ does not bump, we must have $a_{j+1}^i < U^\prime(i,j+1)$.  This implies $b_j^{i+1}>a_{j+1}^i=a_{j+1}^{i+1}$.

The argument above shows that for fixed $j$,  $b_j^i > a_{j+1}^i$ for all $0 \leq i \leq k$.  This implies $b_{j-1}^0 > a_j^0$, which then implies the relations in part (2a) of the lemma.

\textbf{Proof of (2b):} Notice that row $i+1$ in $U^\prime$ will correspond to row $i$ in $U$ for all $i_b \leq i \leq k+1$.  Since row $i_b$ has only one box in it, then $a_{j+1}^{i_b}=a_{j+1}^{i_b+1}$ for $2\leq j \leq m+1$.  So assume $j$ is fixed such that $2 \leq j \leq m+1$.  The proof for part (2a) establishes $b_j^i > a_{j+1}^i$ for $0 \leq i \leq i_b$, which immediately implies $b_j^{i_b} > a_{j+1}^{i_b+1}$.  

Now suppose by induction that $b_j^p > a_{j+1}^{p+1}$ for all $p$ such that $i_b\leq p \leq i$ for some $i$.  We establish $b_j^{i+1} > a_{j+1}^{i+2}$ by considering the following cases.

\emph{Case 1:} Suppose $b_j^i$ bumps the entry $b_j^{i+1}$ in position $(i,j)$ of $U$, and $a_{j+1}^{i+1}$ bumps the entry $a_{j+1}^{i+2}$ in position $(i+1, j+1)$ of $U \leftarrow b$.  Then $U(i,j)=b_j^{i+1}> U(i,j+1)=U^\prime(i+1, j+1)=a_j^{i+2}$. 

\emph{Case 2:}  Suppose $b_j^i$ does not bump in position $(i,j)$ of $U$, and $a_{j+1}^{i+1}$ bumps the entry $a_{j+1}^{i+2}$ in position $(i+1, j+1)$ of $U \leftarrow b$. Then $b_j^{i+1}=b_j^i>a_{j+1}^{i+1} \geq a_{j+1}^{i+2}$.

\emph{Case 3:} Suppose $b_j^i$ does not bump in position $(i,j)$ of $U$, and $a_{j+1}^{i+1}$ does not bump in position $(i+1, j+1)$ of $U\leftarrow b$. Then $b_j^{i+1}=b_j^i>a_{j+1}^{i+1} = a_{j+1}^{i+2}$.

\emph{Case 4:} Suppose $b_j^i$ bumps the entry $b_j^{i+1}$ in position $(i,j)$ of $U$, and $a_{j+1}^{i+1}$ does not bump in position $(i+1, j+1)$ of $U\leftarrow b$. Let $U^\prime=U \leftarrow b$. Then
\begin{displaymath}
U^\prime(i+1, j)=b_j^i \geq b_j^{i+1} = U(i,j) > U(i,j+1)=U^\prime(i+1, j+1).
\end{displaymath}
Because $U^\prime(i+1,j)=b_j^i>a_{j+1}^{i+1}$ and $a_{j+1}^{i+1}$ does not bump, we must have $a_{j+1}^{i+1}<U^\prime(i+1,j+1)$. This implies $b_j^{i+1}> a_{j+1}^{i+1}=a_{j+1}^{i+2}$.

In the case $j=1$, the definition of RCT insertion forces each scanning value $b_1^i=b_1^0$ for all rows $0 \leq i \leq i_b$.  Since $b_1^0 > a_2^0$ by the argument above, and since the entries bumped by $a_2^0$ in the second column get weakly smaller we have $b_1^i>a_2^i$ for all $0 \leq i \leq i_b$ as needed.
\end{proof}

We can apply Lemma \ref{scanning values} to prove the following proposition, which describes where new boxes are added after consecutive insertions. 

\begin{proposition}{\label{bumping}}
Let $U$ be a RCT with $k$ rows, longest row length $m$.  Let $a$, $b$, and $c$ be positive integers with $a<b\leq c$.  Consider successive insertions $U_1 := (U \leftarrow b)\leftarrow c$ and $U_2:=(U \leftarrow b) \leftarrow a$.  Let $B_a=(i_a, j_a), B_b=(i_b, j_b),$ and $B_c=(i_c, j_c)$ be the new boxes created after inserting $a,b,$ and $c$, respectively, into the appropriate RCT.  Let $i_1$ be a row in $U_1$ which contains a box $(i_1, j_1)$ from $I(b)$ and a box $(i_1, j_1^\prime)$ from $I(c)$.  Similarly, let $i_2$ be a row in $U_2$ which contains a box $(i_2, j_2)$ from $I(b)$ and a box $(i_2, j_2^\prime)$ from $I(a)$.   Then 
\begin{enumerate}
\item In $U_1$, $j_c \leq j_b$.  In $U_2$, $j_a > j_b$.  
\item In $U_1$, $j_1^\prime \leq j_1$.  In $U_2$, $j_2^\prime > j_2$.
\end{enumerate}
\end{proposition}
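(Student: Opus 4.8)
\emph{Proof proposal.} The plan is to prove both parts by contradiction, relying on Lemma~\ref{scanning values} together with row-strictness of the output RCT, Lemmas~\ref{rows} and~\ref{lastrow}, and the observation (the Remark following Definition~\ref{RCT Insertion}) that the value ``in hand'' throughout a single insertion is weakly decreasing. I would prove the two claims of part~(1) first, since the proof of part~(2) uses them. In each case I argue the generic configuration and then reduce the boundary cases $j_b=1$ and $j_b=m+1$, and the coincidence $i_c=i_b$ in part~(1), to the same template using the matching clauses of Lemma~\ref{scanning values} (note in particular that column $m+1$ of $U$ is empty, so an $I(b)$-box in column $m+1$ can only be $B_b$ itself).

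\emph{Part~(1).} For $j_c\le j_b$ in $U_1$: assume $j_c>j_b$, so $j_c\ge 2$ and $B_c=(i_c,j_c)$ is placed at the end of a row of length $j_c-1$ in $U\leftarrow b$; write $L=(U\leftarrow b)(i_c,j_c-1)$, so the stopping rule gives $c_{j_c}^{i_c}<L$. Since the insertion of $b$ halts in the strictly earlier-scanned column $j_b<j_c$, the scan of $b$ passes through $(i_c,j_c)$ without stopping. If $i_c\neq i_b$, row $i_c$ has the same length $j_c-1$ in $U$ and $U\leftarrow b$, so the failure of the stopping rule gives $b_{j_c}^{i_c}\ge U(i_c,j_c-1)=L$, and Lemma~\ref{scanning values}(1) yields $L\le b_{j_c}^{i_c}\le c_{j_c}^{i_c}<L$, a contradiction; if $i_c=i_b$ then $j_b=j_c-1$ and $L=(U\leftarrow b)(i_b,j_b)=b_{j_b}^{i_b}$, and since the value in hand only decreases and $j_c>j_b$, $L=b_{j_b}^{i_b}\le b_{j_c}^{i_b}\le c_{j_c}^{i_b}<L$, again a contradiction. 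For $j_a>j_b$ in $U_2$: after inserting $b$, row $i_b$ of $U\leftarrow b$ has length exactly $j_b$, so $(i_b,j_b+1)$ is an empty cell at the end of a row of length $j_b$ (use $(i_b,m+2)$ when $j_b=m+1$). Unless the insertion of $a$ has already stopped in a column $>j_b$ (so $j_a>j_b$ already), its scan reaches $(i_b,j_b+1)$ with value $a_{j_b+1}^{i_b}$, and Lemma~\ref{scanning values}(2) gives $a_{j_b+1}^{i_b}<b_{j_b}^{i_b}=(U\leftarrow b)(i_b,j_b)$; hence $a$ stops there and $j_a=j_b+1>j_b$.

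\emph{Part~(2).} By Lemma~\ref{rows}, row $i_1$ of $U_1$ contains exactly one box of $I(b)$, namely $(i_1,j_1)$, and exactly one of $I(c)$, namely $(i_1,j_1')$. Since the path of $c$ meets row $i_1$ only at column $j_1'\neq j_1$ and $B_c$ cannot land on the nonempty cell $(i_1,j_1)$, that entry is untouched by the insertion of $c$, so $(U_1)(i_1,j_1)=b_{j_1}^{i_1}$ and $(U_1)(i_1,j_1')=c_{j_1'}^{i_1}$. Suppose $j_1'>j_1$. Row-strictness of $U_1$ forces $b_{j_1}^{i_1}>c_{j_1'}^{i_1}$. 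But $c$ scans $(i_1,j_1')$ before $(i_1,j_1)$, so $c_{j_1}^{i_1}\le c_{j_1'}^{i_1}$ as the value in hand only drops; and $b_{j_1}^{i_1}\le c_{j_1}^{i_1}$ by Lemma~\ref{scanning values}(1) (using $j_c\le j_b$ from part~(1) to see $c$ is still scanning when it reaches $(i_1,j_1)$, and Lemma~\ref{lastrow} to rule out $c$ stopping above row $i_1$ in column $j_1$). Chaining, $b_{j_1}^{i_1}\le c_{j_1'}^{i_1}<b_{j_1}^{i_1}$, a contradiction, so $j_1'\le j_1$. The claim $j_2'>j_2$ in $U_2$ is proved the same way: first, $(i_2,j_2)\notin I(a)$, since $a$ bumping there would force $(U\leftarrow b)(i_2,j_2)=b_{j_2}^{i_2}\le a_{j_2}^{i_2}$, contradicting Lemma~\ref{scanning values}(2) and its remark; so $j_2'\neq j_2$, and if $j_2'<j_2$ then $(U_2)(i_2,j_2)=b_{j_2}^{i_2}$, $(U_2)(i_2,j_2')=a_{j_2'}^{i_2}$, row-strictness gives $a_{j_2'}^{i_2}>b_{j_2}^{i_2}$, while $a_{j_2'}^{i_2}\le a_{j_2}^{i_2}<b_{j_2}^{i_2}$ by the weak decrease of the value in hand and Lemma~\ref{scanning values}(2), a contradiction.

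\emph{Main obstacle.} All the conceptual weight sits in Lemma~\ref{scanning values}; the difficulty here is purely bookkeeping, and the main obstacle is making it airtight. One must check in every case that the scanning values $b_j^i,c_j^i,a_j^i$ being compared are genuine values in hand (i.e.\ the relevant insertion has not yet terminated at that cell) and that the cell is actually visited by the scan, and one must correctly identify which entry of $U_1$ (resp.\ $U_2$) occupies each cell of an insertion path when the two paths, or a path and the other new box, are adjacent or could a priori coincide; disentangling the index shifts when $U\leftarrow b$ has one more row than $U$ ($j_b=1$) is the most delicate instance.
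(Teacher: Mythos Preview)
Your approach is essentially the paper's: both parts are proved by contradiction using Lemma~\ref{scanning values}, the weak decrease of the value in hand, and row-strictness of the resulting RCT, and your argument for part~(1) in fact fills in detail that the paper leaves to a two-sentence sketch. One small oversight: in the case $i_c\neq i_b$ of part~(1) you assert $U(i_c,j_c-1)=L$, but $b$ may well have bumped at $(i_c,j_c-1)$; the fix is immediate, since in that case $L$ equals the value $b$ placed there, which is $\le b_{j_c}^{i_c}$ because the scan reaches $(i_c,j_c)$ before $(i_c,j_c-1)$, so the chain $L\le b_{j_c}^{i_c}\le c_{j_c}^{i_c}<L$ still goes through.

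For part~(2) the paper's version is slightly cleaner than yours: instead of comparing $b_{j_1}^{i_1}$ with $c_{j_1}^{i_1}$ (which forces you to argue that $c$ is still scanning at $(i_1,j_1)$), the paper compares the column-top values, writing $U_1(i_1,j_1)\le b_{j_1}^0\le c_{j_1}^0\le U_1(i_1,j_1')$ and, for the $a$-case, $U_2(i_2,j_2')\le a_{j_2'}^0<b_{j_2'}^0\le U_2(i_2,j_2)$. Here the outer inequalities come from the weak decrease of the value in hand together with the fact that column $j_1'$ (resp.\ $j_2$) is scanned before column $j_1$ (resp.\ $j_2'$), and the middle inequality is Lemma~\ref{scanning values} at $i=0$; this sidesteps the termination check you flagged as the main obstacle.
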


\begin{remark} Informally, part (1) of Proposition \ref{bumping} states that if $a < b \leq c$, then in $U \leftarrow b \leftarrow c$ the new box created by $c$ is weakly left of the new box created by $b$, and in $U \leftarrow b \leftarrow a$ the new box created by $a$ is strictly right of the new box created by $b$. Part (2) of Proposition \ref{bumping} states that the insertion path of $c$ is weakly left of the insertion path of $b$, and the insertion path of $a$ is strictly right of the insertion path of $b$.
\end{remark}

\begin{proof}
\textbf{Proof of (1):} Lemma \ref{scanning values} part (1) shows that during the insertion of $c$ into $U \leftarrow b$, the scanning values $c_{j_b}^{i_b}$ is weakly greater than the entry occupying the box $B_b$, which forces the new box $B_c$ to be weakly left of $B_b$, that is $j_c\leq j_b$.  Lemma \ref{scanning values} part (2) show that during the insertion of $a$ into $U \leftarrow b$, the new box $B_a$ must occupy position $(i_b, j_b+1)$ if the insertion process reaches this position, implying that the new box $B_a$ is always strictly right of the box $B_b$, that is $j_a> j_b$.

\textbf{Proof of (2):} Suppose for a contradiction that there is a row $i_1$ of $U_1$ which contains a box $(i_1, j_1)$ from $I(b)$ and a box $(i_1, j_1^\prime)$ from $I(c)$, and that $j_1^\prime >j_1$.  Then 
\begin{displaymath}
U_1(i_1,j_1) \leq b_{j_1}^0 \leq c_{j_1}^0 \leq U_1(i_1, j_1^\prime)
\end{displaymath}  
which contradicts row-strictness in $U_1$.

Again, suppose for a contradiction that there is a row $i_2$ in $U_2$ which contains a box $(i_2, j_2)$ from $I(b)$ and a box $(i_2, j_2^\prime)$ from $I(a)$ and $j_2^\prime \leq j_2$.  If the boxes coincide, that is $j_2=j_2^\prime$, then $a_{j_2}^{i_2}$ bumped the entry $b_{j_2}^{i_2}$ in position $(i_2, j_2)$ of $U \leftarrow b$, and Lemma \ref{scanning values} shows $b_{j_2}^{i_2} > a_{j_2+1}^{i_2} \geq a_{j_2}^{i_2}$, which contradicts the definition of RCT insertion.  If $j_2^\prime < j_2$ then
\begin{displaymath}
U_2(i_2, j_2^\prime) \leq a_{j_2^\prime}^0< b_{j_2^\prime}^0 \leq U_2(i_2, j_2)
\end{displaymath}
where $a_{j_2^\prime}^0 < b_{j_2^\prime}^0$ is established by using Lemma \ref{scanning values}. But this contradicts row-strictness of $U_2$.
\end{proof}

The following lemma follows from Proposition \ref{bumping}.

\begin{lemma}{\label{readingorder}}
Consider the RCT obtained after $n$ successive insertions 
\begin{displaymath}
U_n := (\cdots ((U \leftarrow b_1) \leftarrow b_2) \cdots) \leftarrow b_n
\end{displaymath}
with $b_1\leq b_2 \leq \cdots \leq b_n$ positive integers. Let $B_1, B_2, \ldots, B_n$ be the corresponding new boxes.  Then in $U_n$, 
\begin{displaymath}
B_n <_{col} B_{n-1} <_{col} \cdots <_{col} B_1.
\end{displaymath}
\end{lemma}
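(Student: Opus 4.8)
The plan is to induct on $n$. The base case $n=1$ is vacuous. For the inductive step I would assume that the ordering $B_{n-1}<_{col}B_{n-2}<_{col}\cdots<_{col}B_1$ already holds in $U_{n-1}$, form $U_n=U_{n-1}\leftarrow b_n$, let $B_n$ denote the box created by this last insertion, and prove two things: (i) the relative column reading order of $B_1,\dots,B_{n-1}$ is not disturbed in passing from $U_{n-1}$ to $U_n$, and (ii) $B_n<_{col}B_{n-1}$ in $U_n$. Together with the inductive hypothesis these yield the claim.

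For (i) I would observe that RCT insertion never deletes a box, so each $B_i$ with $i<n$ survives in $U_n$ in the same column; its row index is unchanged unless the insertion of $b_n$ appends a brand-new length-one row, which necessarily lies either strictly above or strictly below $B_i$. If it lies above, the row indices of exactly those among $B_1,\dots,B_{n-1}$ that sit below the new row increase by one; otherwise nothing moves. In every case the relative order of the rows of $B_1,\dots,B_{n-1}$ (and of their columns) is preserved, so $B_{n-1}<_{col}\cdots<_{col}B_1$ still holds in $U_n$.

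For (ii) I set $U=U_{n-2}$, $b=b_{n-1}$, $c=b_n$, so $b\le c$ and $U_n=(U\leftarrow b)\leftarrow c$, and I write $B_b=B_{n-1}=(i_b,j_b)$ and $B_c=B_n=(i_c,j_c)$. Proposition~\ref{bumping}(1) gives $j_c\le j_b$, so the only case that needs work is $j_c=j_b=:j$, where I must show $i_c>i_b$. If $j>1$: the insertion of $b$ stops in column $j$, hence never scans column $j-1$, so $(U\leftarrow b)(r,j-1)=U(r,j-1)$ for all $r$; and because $b$ placed its new box at $(i_b,j)$ rather than in a higher row of column $j$, for every row $r<i_b$ of length $j-1$ the value in hand when $b$ reaches $(r,j)$ is $\ge U(r,j-1)$. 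By Lemma~\ref{scanning values}(1) the value in hand when $c$ reaches $(r,j)$ is at least as large, so $c$ likewise does not place its new box in any such row; and $(i_b,j)$ is occupied in $U\leftarrow b$, so $c$ cannot place there either. Thus $i_c>i_b$, and since no new row is created the row indices agree in $U_n$, giving $B_n<_{col}B_{n-1}$. If $j=1$: then $B_b$ and $B_c$ are new length-one rows, carrying the entries $b_1^{0}$ and $c_1^{0}$ bumped into the first column, and Lemma~\ref{scanning values}(1) gives $b_1^{0}\le c_1^{0}$. Since the first column of $U_{n-1}$ is weakly increasing with $(U_{n-1})(i_b,1)=b_1^{0}\le c_1^{0}$, the rule in Definition~\ref{RCT Insertion}(3) places the new row created by $c$ strictly below row $i_b$, so $i_c>i_b$ in $U_n$ and again $B_n<_{col}B_{n-1}$.

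The main obstacle is the sub-case $j_c=j_b$ of (ii): Proposition~\ref{bumping} controls only the columns of the new boxes, not their rows, so I have to re-enter the insertion dynamics and use the domination of scanning values from Lemma~\ref{scanning values}, together with the weak monotonicity of the first column in the $j=1$ case, to locate the row of $B_n$. Everything else is routine bookkeeping about how row numbers shift when a length-one row is appended.
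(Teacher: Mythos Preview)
Your argument is correct, but you take a harder road than the paper for the same-column sub-case. The paper's proof does not induct and does not re-enter the scanning-value analysis at all: from Proposition~\ref{bumping} it gets $j_{c}\le j_{b}$ just as you do, and for two new boxes $B_{i_1},B_{i_2}$ (with $i_1<i_2$) landing in the same column $j$ it simply invokes Lemma~\ref{lastrow}. Namely, once $B_{i_1}$ is created its row has length $j$; the intermediate $B_k$ (which lie in columns $\le j$) cannot lengthen that row; so when $b_{i_2}$ is inserted and its augmented row sits strictly above a row of the same length $j$, Lemma~\ref{lastrow} is violated. That one-line contradiction replaces your entire case analysis on $j>1$ versus $j=1$ and your appeal to Lemma~\ref{scanning values}. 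Your route has the virtue of being self-contained once Proposition~\ref{bumping} and Lemma~\ref{scanning values} are available, and it makes the row placement completely explicit; the paper's route is shorter because Lemma~\ref{lastrow} was proved precisely to control the vertical position of equal-length rows, and this lemma is exactly what the ``main obstacle'' you identify is asking for.
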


\begin{proof}
Proposition \ref{bumping} implies the new boxes are added weakly right to left.  Let $i_1 <i_2$ and consider two boxes $B_{i_1}$ and $B_{i_2}$ in the same column.  Note that $B_{i_1}$ and $B_{i_2}$ cannot coincide.  Suppose for a contradiction that $B_{i_1}$ is (strictly) below $B_{i_2}$.  Suppose the row containing box $B_{i_1}$ has length $j$.  Once $B_{i_1}$ is added, the new boxes $B_k$  for $i_1<k<i_2$ cannot change the length of the row containing $B_{i_1}$.    Thus, when $B_{i_2}$ is added to the end of a row of length $j-1$ strictly above the row containing $B_{i_1}$, we contradict Lemma \ref{lastrow}.
\end{proof}

\subsection{Elementary Transformations}

Knuth's contribution to the RSK algorithm included describing Schensted insertion in terms of two \emph{elementary transformations $\kone$ and $\ktwo$} which act on words $w$.  Let $a,b,$ and $c$ be positive integers.  Then

\begin{displaymath}
\begin{array}{lll}
\mathcal{K}_1: & bca \to bac & \text{ if } a<b \leq c \\
\mathcal{K}_2: & acb \to cab & \text{ if } a \leq b < c 
\end{array}.
\end{displaymath}

The relations $\kone, \ktwo,$ and their inverses $\kone^{-1}, \ktwo^{-1}$, act on words $w$ by transforming triples of consecutive letters.  Denote by $\stackrel{1}{\cong}$ the equivalence relation defined by using $\kone$ and $\kone^{-1}$.  That is, $w \stackrel{1}{\cong} w^\prime$ if and only if $w$ can be transformed into $w^\prime$ using a finite sequence of transformations $\kone$ or $\kone^{-1}$.

\begin{lemma}{\label{knuth}}
Let $U$ be a RCT and let $w$ and $w^\prime$ be two words such that $w \stackrel{1}{\cong} w^\prime$.  Then
\begin{displaymath}
U \leftarrow w = U \leftarrow w^\prime.
\end{displaymath}
\end{lemma}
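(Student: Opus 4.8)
The plan is to reduce the statement to a single application of an elementary transformation and then prove the key case directly using the Main Bumping Property (Proposition \ref{bumping}) together with Lemma \ref{scanning values}. Since $w \stackrel{1}{\cong} w'$ means there is a finite chain of words $w = u^{(0)}, u^{(1)}, \ldots, u^{(r)} = w'$ in which each $u^{(t+1)}$ is obtained from $u^{(t)}$ by a single application of $\kone$ or $\kone^{-1}$, and since $U \leftarrow -$ is applied letter by letter, it suffices to show that for any RCT $V$ and any positive integers $a < b \leq c$ we have $V \leftarrow bca = V \leftarrow bac$; applying this with $V$ equal to the RCT obtained by inserting the prefix of $w$ to the left of the active triple, and noting that the suffix is inserted identically afterward, gives $U \leftarrow u^{(t)} = U \leftarrow u^{(t+1)}$ for each $t$, and hence $U \leftarrow w = U \leftarrow w'$ by transitivity.

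So the heart of the matter is: with $a < b \leq c$, inserting $c$ then $a$ into $V \leftarrow b$ produces the same RCT as inserting $a$ then $c$ into $V \leftarrow b$. Write $W = V \leftarrow b$, and compare $(W \leftarrow c) \leftarrow a$ with $(W \leftarrow a) \leftarrow c$. The strategy is to track the two insertion paths column by column using the scanning-value formalism. Lemma \ref{scanning values} already tells us, in the configuration $V \leftarrow b \leftarrow c$, that the scanning values of $c$ dominate those of $b$ weakly, and in $V \leftarrow b \leftarrow a$ that the scanning values of $b$ strictly dominate those of $a$; Proposition \ref{bumping} then tells us the new box $B_c$ lies weakly left of $B_b$ while $B_a$ lies strictly right of $B_b$, and moreover that the path $I(c)$ is weakly left of $I(b)$ which is (by the $a$-analysis) strictly left of $I(a)$. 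The point is that these disjointness/ordering facts show the two insertions essentially do not interfere: in $W \leftarrow c$ the columns strictly to the right of $I(c)$ are untouched, so inserting $a$ afterward behaves on those columns exactly as inserting $a$ directly into $W$ would; symmetrically, inserting $a$ into $W$ only touches columns strictly right of $I(b)$, leaving the left part of $W$ — where $I(c)$ lives — unchanged, so the subsequent insertion of $c$ proceeds identically. Making this precise requires showing that the scanning values of $a$ are unaffected by whether $c$ has already been inserted, and vice versa, which follows from the column separation: the bumping decisions for $a$ depend only on entries in columns it actually visits (strictly right of $I(b) \supseteq_{\text{left}} I(c)$), and those entries are the same in $W$ and in $W \leftarrow c$; likewise the bumping decisions for $c$ depend only on entries weakly left of $I(b)$, unchanged by inserting $a$.

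I expect the main obstacle to be the boundary/overlap behavior at the column where $I(b)$ sits and at the rows where new boxes are created — in particular handling the cases $j_b = 1$ (so $W$ has an extra row and the row-correspondence shifts, exactly the source of the two-case split in Lemma \ref{scanning values}) and verifying that $B_a$ being forced into position $(i_b, j_b+1)$, if the $a$-insertion reaches that far, is consistent in both orders of insertion. One must check that when $a$'s path would terminate by creating a new box adjacent to $B_b$, the presence or absence of $c$'s modifications does not change where that termination occurs; this is where Proposition \ref{bumping}(1) (namely $j_a > j_b \geq j_c$) does the real work, since it guarantees $B_a$ and $B_c$ end up in different columns and neither insertion's terminal box interferes with the other's path. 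Once the column-separation bookkeeping is in place, comparing the two resulting fillings box by box gives equality, completing the proof.
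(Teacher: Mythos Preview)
Your reduction to the single-move case $V \leftarrow b \leftarrow c \leftarrow a = V \leftarrow b \leftarrow a \leftarrow c$ with $a < b \leq c$ is exactly right, and matches the paper. The gap is in the ``column separation'' claim that carries the rest of the argument. Proposition~\ref{bumping}(2) is a \emph{row-by-row} statement: it only says that in a row containing boxes from both $I(b)$ and $I(c)$ (resp.\ $I(a)$), the $I(c)$-box is weakly left (resp.\ the $I(a)$-box is strictly right). It does \emph{not} say that the set of columns occupied by $I(c)$ lies to the left of the set of columns occupied by $I(b)$ or $I(a)$. In fact the paths typically overlap in columns: each insertion path begins in a right-hand column and moves leftward, and Case~2 in the proof of Lemma~\ref{scanning values} already exhibits $I(b)$ and $I(c)$ sharing a box. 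So the assertion that ``inserting $a$ into $W$ only touches columns strictly right of $I(b)$'' and that ``$I(c)$ lives in the left part of $W$'' is not justified, and the non-interference argument built on it collapses.

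Even granting that $I(a)$ and $I(c)$ never share a box (which is true, but requires proof), there is a second issue you have not addressed: the bumping decision at a box $(i,j)$ depends on the entry at $(i,j-1)$, so if $(i,j-1) \in I(c)$ then inserting $c$ alters the very entry that governs whether $a$ bumps at $(i,j)$. This is precisely the interference that makes the lemma delicate. The paper handles it by an induction along $I(c)$ and then along $I(a)$, at each box checking (Step~1) that the other insertion cannot have altered the cell to the left in a way that creates a new bump, (Step~2) that it cannot have suppressed an existing bump, and (Step~3) that the bumped value is unchanged; each step requires the fine inequalities of Lemma~\ref{scanning values}, not just the positional conclusions of Proposition~\ref{bumping}. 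Your sketch would need to carry out this same box-by-box verification --- the boundary cases you flagged at the end are not the main obstacle; the core difficulty is that the paths genuinely interleave in columns and one must rule out interference cell by cell.
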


\begin{proof}
It suffices to show
\begin{displaymath}
U \leftarrow b \leftarrow c \leftarrow a = U \leftarrow b \leftarrow a \leftarrow c
\end{displaymath}
for positive integers $a < b \leq c$.

Consider the insertion path $I(c)$ in $U \leftarrow b \leftarrow c$.  By Proposition \ref{bumping} we know that in $U \leftarrow b \leftarrow a$ the insertion of $a$ cannot end in a new box in the first column.  Thus, for each box $(i,j)$ in $I(c)$ we may consider the corresponding box $(i,j)$ in $U \leftarrow b \leftarrow a$.

We will inductively show that the insertion path of $c$ when inserting into $U \leftarrow b \leftarrow a$ is the exact same set of boxes $I(c)$ and bumps exactly the same set of entries in these boxes. Let $(i,j)$ be the largest box with respect to $<_{col}$ in $U \leftarrow b \leftarrow c$, that is, the box $(i,j)$ is the first box used in the insertion of $c$ into $U \leftarrow b$.  The base case is established in three steps.

\emph{Step 1:} The entry $c$ cannot bump in a box $(p,q)$ such that $(i,j) <_{col} (p,q)$ in $U \leftarrow b \leftarrow a$.  Suppose for a contradiction that $c$ does bump in box $(p,q)$. Then, by definition of insertion, we get $q\neq 1$. This implies the box $(p,q-1)$ is in the path of $a$, which places the path of $a$ strictly to the left of the path of $c$ in $U \leftarrow b \leftarrow a \leftarrow c$, which contradictions Proposition \ref{bumping}.

\emph{Step 2:} The box $(i,j)$ in $U \leftarrow b \leftarrow a \leftarrow c$ must be in the path of $c$.  Suppose for a contradiction that the box $(i,j)$ is not in the path of $c$. Under these assumptions $j \neq 1$.  Then this implies that $(i,j)$ is also in the path of $a$.  Let $d$ be the entry in position $(i,j-1)$ of $U \leftarrow b$, $U \leftarrow b \leftarrow c$, and $U \leftarrow b \leftarrow a$. Since $(i,j)$ is in the path of $c$ in $U \leftarrow b \leftarrow c$, then $d > c$.  Similarly, $d > a_j^i$.  Since $(i,j)$ is not in the path of $c$ in $U \leftarrow b \leftarrow a  \leftarrow c$ by assumption, and since $d$ is still the entry in position $(i,j-1)$ when $c$ scans it, we must have $c=c_j^i < a_j^i$, which contradicts Lemma \ref{scanning values}.

\emph{Step 3:} This step only needs to be checked when $j \geq 2$.  We claim the entry $c$ bumps the same value in box $(i,j)$ in both $U \leftarrow b  \leftarrow c$ and $U \leftarrow b  \leftarrow a  \leftarrow c$.  Suppose for a contradiction that $c$ bumps a different value in $(i,j)$ of $U\leftarrow b  \leftarrow a \leftarrow c$. Then this implies the box $(i,j)$ is in the path of $a$ and (by assumption) in the path of $c$. Consider the following diagram, which depicts boxes $(i,j-1)$ and $(i,j)$.

\[
\begin{array}{ccccc}
U & U \leftarrow b & U \leftarrow b \leftarrow c & U \leftarrow b  \leftarrow a & U \leftarrow b \leftarrow a \leftarrow c \\
\Tableau{ d & y} & \Tableau{ \tilde{d} & \tilde{y} } & \Tableau{ \tilde{d} & c } & \Tableau{\tilde{d} & a_j^i } & \Tableau{\tilde{d} & c}
\end{array}
\]

Using Lemma \ref{scanning values} we get $b_j^i > a_{j+1}^i$ (or $b_j^i > a_{j+1}^{i+1}$ in the appropriate rows if the insertion of $b$ into $U$ created a new row). Thus  $b_j^i > a_j^i \geq \tilde{y} \geq y$.  We can also establish $b_j^i \leq c_j^i=c$ (or $b_j^i \leq c_j^{i+1} \leq c_j^i=c$).  In the case $y=\tilde{y}$ then either $\tilde{d}=b_{j-1}^i \leq b_j^i \leq c < \tilde{d}$ which is a contradiction, or $\tilde{d}=d>c \geq b_j^i$.  In the case $\tilde{y}=b_j^i$ then $d=\tilde{d}> c \geq b_j^i$. In all cases we have $d>b_j^i$ and $b_j^i \geq y$, which implies $b_j^i$ must bump in position $(i,j)$.  This immediately implies (by way of Proposition \ref{bumping}) that $a$ cannot have the box $(i,j)$ in its bumping path.

This completes the base case.  To finish the proof induct on the path $I(c)$. Suppose by induction that the path of $c$ in $U \leftarrow b \leftarrow c$ is identical to the path of $c$ in $U \leftarrow b \leftarrow a \leftarrow c$, and the bumped entries are the same in both paths, up to some box $(i,j)$ where the path, or the value bumped, is different.  Under the inductive hypothesis the scanning values $c_j^i$ obtained when inserting $c$ into $U \leftarrow b$ are equal to the scanning values, also denoted $c_j^i$, obtained when inserting $c$ into $U \leftarrow b \leftarrow a$. We show $(i,j)$ in $U \leftarrow b \leftarrow a \leftarrow c$ is in the path of $c$ if and only if $(i,j)$ in $U\leftarrow b \leftarrow c$ is in the path of $c$, and $c_j^i$ bumps the same valued entry.  We do this in three steps which are identical to the three steps above.

\emph{Step 1:} If $(i,j)$ is in the path of $c$ in $U \leftarrow b \leftarrow a \leftarrow c$, then $(i,j)$ is in the path of $c$ in $U \leftarrow b \leftarrow c$.  This is clearly true if $j=1$.  When $j \geq 2$ and if this were not the case, that is $(i,j)$ is not in the path of $c$ in $U \leftarrow b \leftarrow c$, then the entry $a$ must have $(i,j-1)$ in its insertion path in $U \leftarrow b \leftarrow a$, which places the path of $a$ strictly to the left of the path of $c$ in $U \leftarrow b \leftarrow a \leftarrow c$ which is a contradiction.

\emph{Step 2:} If $(i,j)$ is in the path of $c$ in $U \leftarrow b \leftarrow c$ then $(i,j)$ is in the path of $c$ in $U \leftarrow b \leftarrow a \leftarrow c$. Again, this is clearly true if $j=1$.  When $j \geq 2$ and if this were not the case, that is $(i,j)$ is not in the path of $c$ in $U \leftarrow b \leftarrow a \leftarrow c$, then the entry $a$ must have bumped in position $(i,j)$. Let $d$ be the entry in box $(i,j-1)$ of $U \leftarrow b$, $U \leftarrow b \leftarrow c$, and $U \leftarrow b \leftarrow a$. The fact that $(i,j)$ is in the path of $c$ in $U \leftarrow b \leftarrow c$ implies $d > c_j^i$.  Similarly, $d > a_j^i$.  Under our assumptions $(i,j)$ is not in the path of $c$ in $U \leftarrow b \leftarrow a \leftarrow c$, which implies $c_j^i < a_j^i$ which contradicts Lemma \ref{scanning values}.

\emph{Step 3:} This step only needs to be checked if $j \geq 2$.  We claim the same value is bumped in box $(i,j)$ of $U\leftarrow b \leftarrow c$ and $U \leftarrow b \leftarrow a \leftarrow c$.  If this were not the case then we must have that both $a$ and $c$ have the box $(i,j)$ in their respective insertion paths in $U \leftarrow b \leftarrow a \leftarrow c$.  Consider the following diagram which depicts boxes $(i,j-1)$ and $(i,j)$.

\[
\begin{array}{ccccc}
U & U \leftarrow b & U \leftarrow b \leftarrow c & U \leftarrow b  \leftarrow a & U \leftarrow b \leftarrow a \leftarrow c \\
\Tableau{ d & y} & \Tableau{ \tilde{d} & \tilde{y} } & \Tableau{ \tilde{d} & c_j^i } & \Tableau{\tilde{d} & a_j^i } & \Tableau{\tilde{d} & c_j^i}
\end{array}
\]

Using Lemma \ref{scanning values} we get $b_j^i > a_{j+1}^i$ (or $b_j^i > a_{j+1}^{i+1}$ in the appropriate rows if the insertion of $b$ into $U$ created a new row). Thus  $b_j^i > a_j^i \geq \tilde{y} \geq y$.  We can also establish $b_j^i \leq c_j^i$ (or $b_j^i \leq c_j^{i+1} \leq c_j^i$).  In the case $y=\tilde{y}$ then either $\tilde{d}=b_{j-1}^i \leq b_j^i \leq c_j^i < \tilde{d}$ which is a contradiction, or $\tilde{d}=d>c_j^i \geq b_j^i$.  In the case $\tilde{y}=b_j^i$ then $d=\tilde{d}> c_j^i \geq b_j^i$. In all cases we have $d>b_j^i$ and $b_j^i \geq y$, which implies $b_j^i$ must bump in position $(i,j)$.  This immediately implies (by way of Proposition \ref{bumping}) that $a$ cannot have the box $(i,j)$ in its bumping path.

Thus the path of $c$ in $U \leftarrow b \leftarrow c$ is exactly the same set of boxes as the path of $c$ in $U \leftarrow b \leftarrow a \leftarrow c$, and in both cases the same valued entries are bumped.

Now consider the insertion path $I(a)$ of $a$ in $U \leftarrow b \leftarrow a$.  By Proposition \ref{bumping} the insertion of $c$ into $U \leftarrow b$ may create a new box in the first column.  Despite this we may still consider the boxes in $U\leftarrow b \leftarrow c$ that correspond to the boxes in $I(a)$ since any particular box $(i,j)$ in $I(a)$ corresponds to the box $(i,j)$ in $U \leftarrow b \leftarrow c$ if row $i$ is above the new row created by $c$, or $(i,j)$ in $I(a)$ corresponds to $(i+1, j)$ in $U \leftarrow b \leftarrow c$ if row $i$ is weakly below the new row created by $c$.  With this in mind we will denote by $\widehat{(i,j)}$ the box in $U\leftarrow b \leftarrow c$ that corresponds to the box $(i,j)$ in $I(a)$.

Note that in both $U \leftarrow b \leftarrow a$ and $U \leftarrow b \leftarrow c \leftarrow a$ the path of $a$ cannot contain a box in the first column of the respective RCT.

We will inductively show that the path of $a$ in both $U \leftarrow b \leftarrow a$ and $U \leftarrow b \leftarrow c \leftarrow a$ consists of the the same (corresponding) boxes and the entries bumped in each path are equal entry by entry. Let $(i,j)$ be the largest box in $I(a)$ with respect to $<_{col}$. The base case can be established in three steps.

\emph{Step 1:} The entry $a$ cannot bump before the box $\widehat{(i,j)}$.  Suppose for a contradiction that $a$ bumped in some box $(p,q)$ with $\widehat{(i,j)}<_{col}(p,q)$. This implies that the box $(p, q-1)$ is in the path of $c$ in $U \leftarrow b \leftarrow c \leftarrow a$. 

Assume $q-1 \geq 2$.  Consider the diagram below, which depicts boxes $(p, q-2), (p, q-1)$, and $(p,q)$.

\[
\begin{array}{ccccc}
U & U \leftarrow b & U \leftarrow b \leftarrow a & U \leftarrow b  \leftarrow c & U \leftarrow b \leftarrow c \leftarrow a \\
\Tableau{ z & d & y} & \Tableau{ \tilde{z} & \tilde{d} & \tilde{y} } & \Tableau{ \hat{z} & \hat{d} & \tilde{y} } & \Tableau{\tilde{z} & c_{q-1}^p & \tilde{y} } & \Tableau{\tilde{z} & c_{q-1}^p & a}
\end{array}
\]

where either $\hat{z}$ or $\hat{d}$ could equal $a$ (but not both). We then get the inequalities $a \geq \tilde{y}$ which forces $a \geq \tilde{d}$. By Lemma \ref{scanning values} we know $b_{q-1}^p > a$, which implies $b_{q-1}^p > \tilde{d}\geq d$.  Thus the box $(p, q-1)$ is not in the path of $b$ and $\tilde{d}=d$.  On the other hand we see $\tilde{z} > c_{q-1}^p \geq b_{q-1}^p$, and since the path of $c$ cannot be strictly right of the path of $b$ we also see the box $(p, q-2)$ is not in the path of $b$ and thus $z=\tilde{z}$.  In the end we get the relations $z> b_{q-1}^p$ and $b_{q-1}^p> d$ which implies the box $(p, q-1)$ is in the path of $b$ and is a contradiction to the previously established condition on the box $(p, q-1)$.

Now we can assume $q-1=1$.  In this case the box $(p, q-1)=(p,1)$ is still in the path of $c$ and the position $(p,q)=(p, 2)$ is empty during the insertion of $a$.  With our assumptions that $(p,2)$ is in the path of $a$ in $U \leftarrow b \leftarrow c \leftarrow a$ and $\widehat{(i,j)} <_{col} (p,q)$ this forces $j=2$ and the insertion of $b$ must have created a new box in the first column, say in position $(r, 1)$ with $r<p$.  This means position $(r, 2)$ is empty during the insertion of $a$ and by Lemma \ref{scanning values}, $b_q^r > a$ and thus $a$ must insert in position $(r,2)$. Which means $a$ cannot have $(p,q)$ in its path.

\emph{Step 2:} The entry $a$ must bump in box $\widehat{(i,j)}$. Suppose for a contradiction that $a$ does not bump in box $\widehat{(i,j)}$ during the insertion of $a$ into $U \leftarrow b \leftarrow c$.  If $a$ does not bump in box $\widehat{(i,j)}$ then we must have $\widehat{(i,j)}$ in the path of $c$. As indicated above, $j\neq 1$.

Consider the following diagram which depicts boxes $(i,j-1)$ and $(i,j)$ and the corresponding boxes $\widehat{(i,j-1)}$ and $\widehat{(i,j)}$.

\[
\begin{array}{ccccc}
U&U \leftarrow b & U \leftarrow b \leftarrow a & U \leftarrow b  \leftarrow c & U \leftarrow b \leftarrow c \leftarrow a \\
 \Tableau{ d&y}&\Tableau{ d & y } & \Tableau{ d & a } & \Tableau{ d & c_j^i } & \Tableau{d & c_j^i}
\end{array}
\]

where neither box $(i,j)$ nor $(i,j-1)$ can be in the path of $b$ since the box $(i,j)$ is in the path of $a$ and $\widehat{(i,j)}$ is in the path of $c$. From our assumptions we get the inequalities $d > a \geq y$ and $d > c_j^i>a$.  Now consider the scanning values obtained during the insertion of $b$.  Lemma \ref{scanning values} implies $b_j^i > a_{j+1}^i=a \geq y$ and $d > c_j^i \geq b_j^i$.  These inequalities force the box $(i,j)$ to be in the path of $b$, which contradicts properties previously established. This implies $a$ must have $\widehat{(i,j)}$ in its insertion path in $U \leftarrow b \leftarrow c \leftarrow a$.

\emph{Step 3:} The entry $a$ bumps the same entry in box $(i,j)$ in $U \leftarrow b \leftarrow a$ as $a$ bumps in box $\widehat{(i,j)}$ in $U \leftarrow b \leftarrow c \leftarrow a$.  Suppose for a contradiction that $a$ bumps a different entry in box $\widehat{(i,j)}$ during the insertion of $a$ into $U \leftarrow b \leftarrow c$.  This implies $\widehat{(i,j)}$ is in the path of $c$ (and by assumption in the path of $a$). But this contradicts Proposition \ref{bumping}, as the path of $a$ must be strictly rightly right of the path of $c$.

Now induct on the boxes in $I(a)$. Suppose by induction that the path of $a$ in $U \leftarrow b \leftarrow a$ is identical to the path of $a$ in $U \leftarrow b \leftarrow c \leftarrow a$, and the bumped entries are the same in both paths, up to some box $(i,j)$ where the path, or the value bumped, is different.  Under the inductive hypothesis the scanning values $a_j^i$ obtained when inserting $a$ into $U \leftarrow b$ are equal to the scanning values, also denoted $a_j^i$, obtained when inserting $a$ into $U \leftarrow b \leftarrow c$. We show $(i,j)$ in $U \leftarrow b \leftarrow a \leftarrow c$ is in the path of $a$ if and only if $\widehat{(i,j)}$ in $U\leftarrow b \leftarrow c$ is in the path of $a$, and $a_j^i$ bumps the same valued entry.  We do this in three steps which are identical to the three steps above.

\emph{Step 1:} If $\widehat{(i,j)}$ is in the path of $a$ in $U \leftarrow b \leftarrow c \leftarrow a$ then $(i,j)$ must be in the path of $a$ in $U \leftarrow b \leftarrow a$.  If this were not the case then the box $\widehat{(i,j-1)}$ is in the path of $c$.

Assume $j-1 \geq 2$.  Consider the diagram below, which depicts boxes $(i, j-2), (i, j-1)$, and $(i,j)$ and the corresponding boxes $\widehat{(i,j-2)}, \widehat{(i,j-1)}, \widehat{(i,j)}$.

\[
\begin{array}{ccccc}
U & U \leftarrow b & U \leftarrow b \leftarrow a & U \leftarrow b  \leftarrow c & U \leftarrow b \leftarrow c \leftarrow a \\
\Tableau{ z & d & y} & \Tableau{ \tilde{z} & \tilde{d} & \tilde{y} } & \Tableau{ \hat{z} & \hat{d} & \tilde{y} } & \Tableau{\tilde{z} & c_{j-1}^i & \tilde{y} } & \Tableau{\tilde{z} & c_{j-1}^i & a_j^i}
\end{array}
\]

where either $\hat{z}$ or $\hat{d}$ could equal $a_j^i$ (but not both). We then get the inequalities $a_j^i \geq \tilde{y}$ which forces $a_j^i \geq \tilde{d}$. By Lemma \ref{scanning values} we know $b_{j-1}^i> a_j^i$, which implies $b_{j-1}^i > \tilde{d}\geq d$.  Thus the box $(i, j-1)$ is not in the path of $b$ and $\tilde{d}=d$.  On the other hand we see $\tilde{z} > c_{j-1}^i \geq b_{j-1}^i$, and since the path of $c$ cannot be strictly right of the path of $b$ we also see the box $(i, j-2)$ is not in the path of $b$ and thus $z=\tilde{z}$.  In the end we get the relations $z> b_{j-1}^i$ and $b_{j-1}^i> d$ which implies the box $(i, j-1)$ is in the path of $b$ and is a contradiction to the previously established condition on the box $(i, j-1)$.

Now we can assume $j-1=1$.  In this case the box $\widehat{(i, j-1)}=\widehat{(i,1)}$ is still in the path of $c$ and the position $\widehat{(i,j)}=\widehat{(i, 2)}$ is empty during the insertion of $a$.  With our assumption that $\widehat{(i,j)}=\widehat{(i,2)}$ is in the path of $a$ in $U \leftarrow b \leftarrow c \leftarrow a$ this implies the insertion of $b$ must have created a new box in the first column, say in position $(r, 1)$ with $r<i$.  This means position $(r, 2)$ is empty during the insertion of $a$ and by Lemma \ref{scanning values}, $b_j^r > a_j^i$ and thus $a_j^i$ must insert in position $(r,2)$. Which means $a$ cannot have $\widehat{(i,j)}=\widehat{(i,2)}$ in its path which is clearly a contradiction.

\emph{Step 2:} If $(i,j)$ is in the path of $a$ in $U \leftarrow b \leftarrow a$, then the box $\widehat{(i,j)}$ is in the path of $a$ in $U \leftarrow b \leftarrow c \leftarrow a$. As stated above, $j\neq 1$. Suppose for a contradiction that $\widehat{(i,j)}$ is not in the path of $a$. Then the box $\widehat{(i,j)}$ is in the path of $c$. As above consider the following diagram which depicts boxes $(i,j-1)$ and $(i,j)$ and the corresponding boxes $\widehat{(i,j-1)}$ and $\widehat{(i,j)}$.

\[
\begin{array}{ccccc}
U&U \leftarrow b & U \leftarrow b \leftarrow a & U \leftarrow b  \leftarrow c & U \leftarrow b \leftarrow c \leftarrow a \\
 \Tableau{ d&y}&\Tableau{ d & y } & \Tableau{ d & a_j^i } & \Tableau{ d & c_j^i } & \Tableau{d & c_j^i}
\end{array}
\]

where neither box $(i,j)$ nor $(i,j-1)$ can be in the path of $b$ since the box $(i,j)$ is in the path of $a$ and $\widehat{(i,j)}$ is in the path of $c$. From our assumptions we get the inequalities $d > a_j^i \geq y$ and $d > c_j^i>a_j^i$.  Now consider the scanning values obtained during the insertion of $b$.  Lemma \ref{scanning values} implies $b_j^i > a_{j+1}^i\geq a_j^i \geq y$ and $d > c_j^i \geq b_j^i$.  These inequalities force the box $(i,j)$ to be in the path of $b$, which contradicts properties previously established. This implies $a$ must have $\widehat{(i,j)}$ in its insertion path in $U \leftarrow b \leftarrow c \leftarrow a$.

\emph{Step 3:} The values bumped by $a_j^i$ is the same in both $U \leftarrow b \leftarrow a$ and $U\leftarrow b \leftarrow c\leftarrow a$.  If this were not the case, then both $a$ and $c$ would have the box $\widehat{(i,j)}$ in their respective paths, which violates Proposition \ref{bumping}.

\end{proof}

\begin{remark}{\label{knuth-remark}}
In addition to showing $U \leftarrow b \leftarrow c \leftarrow a = U \leftarrow b \leftarrow a \leftarrow c$, the proof above shows that the new boxes added by $a, b,$ and $c$ occupy the same corresponding positions in each of $U \leftarrow b \leftarrow c \leftarrow a$ and $U \leftarrow b \leftarrow a \leftarrow c$.
\end{remark}

\section{Littlewood Richardson Rule}{\label{sec:LRrule}}

In this section we state and prove the main result of this paper, which is

\begin{theorem}{\label{LRrule}}
Let $s_\lambda$ be the Schur function indexed by the partition $\lambda$, and let $\rsqschur_\alpha$ be the row-strict quasisymmetric Schur function indexed by the strong composition $\alpha$. We have
\begin{equation}{\label{LR-equation}}
\rsqschur_\alpha \cdot s_\lambda = \sum_{\beta} C_{\alpha, \lambda}^\beta \rsqschur_\beta
\end{equation}
where $C_{\alpha, \lambda}^\beta$ is the number of Littlewood-Richardson skew RCT of shape $\beta/\alpha$ and content $\reverse{\lambda}$.
\end{theorem}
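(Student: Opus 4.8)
The plan is to prove identity (\ref{LR-equation}) by constructing a weight-preserving bijection
\[
\Phi\colon\ \{(U,T)\}\ \longrightarrow\ \{(V,S)\},
\]
where on the left $U$ ranges over RCT of shape $\alpha$ and $T$ over reverse row-strict tableaux of shape $\lambda^t$, and on the right $V$ ranges over RCT of arbitrary strong composition shape $\beta$ and $S$ over LR skew RCT of shape $\beta/\alpha$ with $\cont(S)=\reverse{\lambda}$. Each single RCT insertion adds exactly one box and otherwise only bumps and augments entries, so the multiset of entries of $U\leftarrow w_{col}(T)$ is the disjoint union of the entries of $U$ and the entries of $T$; hence $\xx^U\xx^T=\xx^V$ whenever $\Phi(U,T)=(V,S)$. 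Granting $\Phi$, we get $\rsqschur_\alpha\, s_\lambda=\sum_{(U,T)}\xx^U\xx^T=\sum_{(V,S)}\xx^V$; grouping the last sum according to the shape $\beta$ of $V$ and using that the number of admissible $S$ attached to a fixed $\beta$ equals $C^{\beta}_{\alpha,\lambda}$ regardless of $V$, this becomes $\sum_{\beta}C^{\beta}_{\alpha,\lambda}\bigl(\sum_{\mathrm{shape}(V)=\beta}\xx^V\bigr)=\sum_{\beta}C^{\beta}_{\alpha,\lambda}\rsqschur_\beta$, which is (\ref{LR-equation}).

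The map $\Phi$ is defined by insertion. Write $w_{col}(T)=C_1C_2\cdots C_k$ with $k=\ell(\lambda)$, where $C_j$ is the $j$th column of $T$ read bottom to top; each $C_j$ is weakly increasing (columns of $T$ weakly decrease downward) and has $\lambda_j$ entries. Insert $C_1,\dots,C_k$ successively into $U$ to get $V:=U\leftarrow w_{col}(T)$, and record in each box created during the insertion of $C_j$ the value $k+1-j$; this produces a filling $S$ of $\beta/\gamma$, where $\gamma$ is the weak composition recording which rows of $V$ descend from rows of $U$ (so $\strongof{\gamma}=\alpha$ and $\gamma\subseteq\beta$), and $\cont(S)=(\lambda_k,\dots,\lambda_1)=\reverse{\lambda}$. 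To see $S$ is an LR skew RCT, I would verify the three clauses of Definition \ref{LRskewCT}: the rows of $S$ strictly decrease because new boxes are appended to row ends in the order they are created, no two of them in a single row of $V$ can come from the same block $C_j$ (Lemma \ref{rows}, Lemma \ref{readingorder}, Lemma \ref{lastrow}), and boxes created earlier carry strictly larger labels; that every Type A and Type B triple of $S$ is an inversion triple, and that $w_{col}(S)$ is a regular reverse lattice word, are then established from the inequalities among entries of $V$ forced by the column-reading order in which boxes are jointly inserted, from the Triple Rule for $V$, and from Lemma \ref{scanning values}, Proposition \ref{bumping} and Lemma \ref{readingorder} — the crucial point for the lattice condition being that, as $V$ is scanned in column-reading order, the new boxes labeled $i$ stay weakly ahead, one by one, of those labeled $i-1$, because consecutive columns of $T$ satisfy $\lambda_j\ge\lambda_{j+1}$.

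For the inverse, given $(V,S)$ one reverse-inserts the new boxes of $V$ by the un-insertion procedure described after Lemma \ref{weakly longer} (legitimate by Lemma \ref{lastrow}): the labels of $S$ are processed in decreasing order, and within a label $i$ in increasing $<_{col}$-order; the entries ejected while undoing the label-$i$ boxes form, written as a column, the $(k+1-i)$th column of a filling $T$ of $\lambda^t$, and what remains is a tableau $U$ of shape $\strongof{\gamma}=\alpha$. One must check that $U$ is a genuine RCT; that $T$ is a genuine reverse row-strict tableau — its columns weakly decrease automatically, while the strict decrease along each row of $T$ is exactly what the regular-reverse-lattice-word hypothesis on $w_{col}(S)$ supplies; and that the order chosen among boxes of equal label is irrelevant, since transposing two adjacent un-insertions inside a weakly increasing block, or commuting two adjacent blocks, amounts to applying $\kone^{\pm1}$ to $w_{col}(T)$, which by Lemma \ref{knuth} and Remark \ref{knuth-remark} changes neither $V$ nor the positions of the new boxes. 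That $\Phi$ and this reverse-insertion map are mutually inverse then follows because each step of one literally undoes a step of the other.

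The main obstacle is the pair of dual assertions at the heart of the bijection: in the forward direction, that the recorded filling $S$ meets clauses (2) and (3) of Definition \ref{LRskewCT}, and in the backward direction, that clauses (2)--(3) force the ejected columns to assemble into a valid reverse row-strict tableau. These are the steps genuinely requiring Lemma \ref{scanning values}, Proposition \ref{bumping}, Lemma \ref{readingorder} and Lemma \ref{knuth} in combination; by contrast the weight bookkeeping is routine, and the only other nuisance is tracking the shape $\gamma$ and the row correspondences, which is fiddly because RCT insertion — unlike ordinary Schensted insertion — may open a new row in the interior of the diagram. I expect the regular-reverse-lattice-word condition (clause (3)) to be the single most delicate point.
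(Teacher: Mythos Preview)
Your bijection $\Phi$ is exactly the paper's map $\rho$: inserting $w_{col}(T)$ block by block and recording the block index is the same as inserting $\widecheck{A}$ and recording $\widehat{A}$ for the two-line array coming from $(T,T_{\lambda^t})$ under RSK. So the overall strategy is right, and you correctly flag clause~(3) of Definition~\ref{LRskewCT} as the delicate point. The problem is that your proposed mechanism for that clause is not the one that works. Knowing only that $\lambda_j\ge\lambda_{j+1}$, together with Proposition~\ref{bumping} and Lemma~\ref{readingorder}, lets you compare the new box from the \emph{last} entry of $C_j$ with the new box from the \emph{first} entry of $C_{j+1}$ (a single consecutive pair), but it does not let you match up the $r$th label-$i$ box with the $r$th label-$(i-1)$ box in $<_{col}$-order for general $r$; between those two insertions sit many others, and Proposition~\ref{bumping} speaks only about consecutive insertions. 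The paper closes this gap by using Lemma~\ref{knuth} and Remark~\ref{knuth-remark} in the \emph{forward} direction: one rewrites the biletter word on the two labels $i,i-1$ via $\kone$-moves into $r_i-r_{i-1}$ solo $i$'s followed by $r_{i-1}$ adjacent pairs $(i,i-1)$, the rewriting being possible precisely because rows of $T$ strictly decrease; since $\kone$ preserves both $V$ and the positions of all new boxes, Proposition~\ref{bumping} now applies to each adjacent pair and yields the lattice inequality. You invoke Lemma~\ref{knuth} only on the inverse side, and there not quite correctly: ``commuting two adjacent blocks'' is not achievable by $\kone^{\pm1}$ alone, and transposing two adjacent letters inside a weakly increasing block is not a $\kone$-move either.

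There is also a small but genuine slip in your inverse map: you un-insert labels in \emph{decreasing} order, but the label-$1$ boxes were created last (they come from $C_k$), so un-insertion must begin with label $1$ and proceed upward, as the paper does. With that fix and with Lemma~\ref{knuth} moved to its rightful place in the forward argument, your outline coincides with the paper's proof; the inversion-triple verification is then a finite case analysis on the eight Type~A and eight Type~B configurations, handled via Lemma~\ref{lastrow} and Lemma~\ref{weakly longer}.
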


\begin{proof}
It suffices to give a bijection $\rho$ between pairs $[U, T]$ and $[V,S]$ where $U$ is a RCT of shape $\alpha$, $T$ a tableau of shape $\lambda^t$, $V$ is a RCT of shape $\beta$, and $S$ is a LR skew RCT of shape $\beta/\alpha$ and content $\reverse{\lambda}$. Throughout this proof, $\lambda_1=m$.

Given a pair $[U,T]$, produce a pair $[V,S]=\rho([U,T])$ in the following way.  First use the classical RSK algorithm to produce a two-line array $A$ corresponding to the pair $(T, T_\lambda)$. Next, successively insert $\widecheck{A}$ into $U$ while simultaneously placing the entries of $\widehat{A}$ into the corresponding new boxes of a skew shape with original shape $\alpha/\alpha$. This clearly produces a RCT $V$ of some shape $\beta$ and a skew filling $S$ of shape $\beta/\gamma$ where $\strongof{\gamma}=\alpha$ and the content of $S$ is $\reverse{\lambda}$.

To show that the skew filling $S$ is indeed a LR skew RCT, first note that since $\widehat{A}$ is weakly decreasing, no row of $S$ will have any instance of entries that strictly increase when read left to right.  Since $A$ is a two-line array, if $i_r=i_s$ for $r \leq s$ then $j_r\leq j_s$.  Lemma \ref{scanning values} then implies that each row of $S$ has distinct entries. Thus, the rows of $S$ strictly decrease when read left to right.

Consider the portion of $A$ where $\widehat{A}$ takes the value $i$.  The corresponding entries in $\widecheck{A}$, when read from left to right, are the entries appearing in the $(m-i+1)$st column of $T$ read from bottom to top. Now consider a different portion of the two-line array $A$ where $\widehat{A}$ takes values $i$ and $i-1$. For the moment, let this portion of $A$ be denoted $A(i,i-1)$ and suppose the number of $i$'s is $r_i$ and the number of $(i-1)$'s is $r_{i-1}$, where $r_i\geq r_{i-1}$ since $\widehat{A}$ is a regular reverse lattice word.  We will let the Knuth transformation $\kone$ act on $A(i,i-1)$ by letting $\kone$ act on $\widecheck{A}(i,i-1)$ and by considering each vertical pair as a bi-letter. We will apply a sequence $\tau$ of transformations $\kone$ to $A(i,i-1)$ until $\tau[\widehat{A}(i,i-1)]$ consists of $r_i-r_{i-1}$ number of $i$'s followed by $r_{i-1}$ pairs of the form $(i,i-1)$. Such a sequence $\tau$ exists because the entry in row $r_{i-1}-k+1$ (for $1\leq k \leq r_{i-1}$) and column $m-i+2$ of $T$ is strictly less than each entry in column $m-i+1$ which appears weakly higher in $T$. If we replace $A(i,i-1)$ with $\tau[A(i,i-1)]$ in $A$ to obtain some array $B$, then Lemma \ref{knuth} and Remark \ref{knuth-remark} imply $$U \leftarrow \widecheck{B} = U \leftarrow \widecheck{A} =V,$$ and the corresponding new box created by any entry $j$ in $\widecheck{B}$ is in the same position as the new box created by the same entry $j$ in $\widecheck{A}$. The advantage of replacing $A$ with $B$ is that now Proposition \ref{bumping} can be applied to each of the $r_{i-1}$ pairs $(i,i-1)$ and their corresponding entries in $\widecheck{B}$ to imply that in any prefix of the column word of $S$, the number of $i$'s will be at least the number of $(i-1)$'s. Hence $w_{col}(S)$ is a regular reverse lattice word.

Next we check that each Type A and Type B triple in $S$ is an inversion triple.  Below are the eight possible configurations of Type A triples in the skew filling $S$.

\begin{picture}(100,100)(-80,0)

\put(0, 50){\tableau{c&a}}
\put(26, 32){\vdots}
\put(18, 10){\tableau{b}}

\put(86, 32){\vdots}
\put(78, 10){\sqone}
\put(60, 50){\tableau{ c & a}}
\put(78, 10){\tableau{ \infty}}

\put(120, 50){\sqone}
\put(120,50){\tableau{\infty}}
\put(138, 50){\tableau{a}}
\put(146, 32){\vdots}
\put(138, 10){\tableau{b}}

\put(180, 50){\tableau{c}}
\put(198, 50){\sqone}
\put(198, 50){\tableau{\infty}}
\put(206, 32){\vdots}
\put(198, 10){\tableau{b}}

\put(240, 50){\sqtwo}
\put(240, 50){\tableau{\infty & \infty}}
\put(266, 32){\vdots}
\put(258, 10){\tableau{b}}

\put(300, 50){\tableau{c}}
\put(318, 50){\sqone}
\put(318, 50){\tableau{\infty}}
\put(326, 32){\vdots}
\put(318, 10){\sqone}
\put(318, 10){\tableau{\infty}}

\put(360, 50){\sqone}
\put(360, 50){\tableau{\infty}}
\put(378, 50){\tableau{a}}
\put(386, 32){\vdots}
\put(378, 10){\sqone}
\put(378, 10){\tableau{\infty}}

\put(420,50){\sqtwo}
\put(446, 32){\vdots}
\put(438, 10){\sqone}
\put(420,50){\tableau{ \infty & \infty}}
\put(438,10){\tableau{\infty}}

\end{picture}

For each arrangement in the figure above, the higher row is weakly longer than the lower row because we only consider Type A triples for now.  Note that the fourth and sixth arrangements cannot exist in $S$ by Definition \ref{RCT Insertion}, and the seventh arrangement cannot exist in $S$ by Lemma \ref{weakly longer}. We can check the remaining arrangements to prove each are inversion triples.  For the first arrangement $c$ must clearly be added before $a$ and Lemma \ref{weakly longer} implies $a$ is added before $b$, which forces the relation $c > a \geq b$. The second arrangement is always an inversion triple.  In the third arrangement Lemma \ref{weakly longer} implies $a$ must have been added before $b$, hence this arrangement is an inversion triple. The fifth and eighth arrangements are always inversion triples. 

Below are the eight possible arrangements of Type B triples in the skew filling $S$.

\begin{picture}(100,100)(-80,0)

\put(0, 50){\tableau{b}}
\put(8, 32){\vdots}
\put(0, 10){\tableau{c&a}}

\put(68, 32){\vdots}
\put(78, 10){\sqone}
\put(60, 50){\tableau{b}}
\put(78, 10){\tableau{ \infty}}
\put(60, 10){\tableau{c}}

\put(120, 50){\sqone}
\put(120,50){\tableau{\infty}}
\put(128, 32){\vdots}
\put(120, 10){\tableau{c&a}}

\put(180, 50){\tableau{b}}
\put(188, 32){\vdots}
\put(180, 10){\sqone}
\put(180, 10){\tableau{\infty}}
\put(198, 10){\tableau{a}}

\put(240, 50){\sqone}
\put(240, 50){\tableau{\infty}}
\put(248, 32){\vdots}
\put(240, 10){\sqone}
\put(240, 10){\tableau{\infty}}
\put(258, 10){\tableau{a}}

\put(300, 50){\tableau{b}}
\put(308, 32){\vdots}
\put(300, 10){\sqtwo}
\put(300, 10){\tableau{\infty&\infty}}

\put(360, 50){\sqone}
\put(360, 50){\tableau{\infty}}
\put(368, 32){\vdots}
\put(360, 10){\tableau{c}}
\put(378, 10){\sqone}
\put(378, 10){\tableau{\infty}}

\put(420,50){\sqone}
\put(420, 50){\tableau{\infty}}
\put(428, 32){\vdots}
\put(420, 10){\sqtwo}
\put(420,10){\tableau{ \infty & \infty}}

\end{picture}

In each of the arrangements above the higher row is strictly shorter than the lower row. Note that the second and seventh arrangement cannot exist in $S$ by Definition \ref{RCT Insertion}.  For the first arrangement Lemma \ref{lastrow} implies the boxes must have been added in the order $b$, $c$, $a$ or $c$, $a$, $b$, giving the relations $b \geq c > a$ or $c > a \geq b$. The third arrangement is always an inversion triple.  For the fourth arrangement, Lemma \ref{lastrow} implies $a$ must have been added before $b$, hence this arrangement is an inversion triple.  The fifth, sixth, and eighth arrangements are always inversion triples.

Thus, we have shown the skew filling $S$ is indeed a LR skew RCT of shape $\beta/\alpha$ and content $\reverse{\lambda}$.

Given a pair $[V,S]$, produce a pair $[U, T]=\rho^{-1}([V,S])$ in the following way.  We can un-insert entries from $V$ by using $S$ as a sort of road map. Specifically, un-insert the entry in $V$ whose box is in the same position as the first occurrence (in the column reading order) of the value $1$ in $S$. This produces a pair $(1, j)$ which is the last entry of a two-line array.  Next, proceed inductively by, at the $i$th step, un-inserting each entry of $V$ which corresponds to each occurrence of the value $i$ in $S$. The row-stirctness of $S$, combined with the triple conditions imposed on $S$, ensure that after each un-insertion from $V$ the resulting figure is an RCT.

What remains after un-inserting the entries is an RCT $U$ of shape $\alpha$ since $S$ had shape $\beta/\alpha$.  The two line array produced is a valid two-line array $A$ by virtue of $w_{col}(S)$ being a regular reverse lattice word.  By RSK, $A$ corresponds to a pair $(T, T_\lambda)$.  Thus we have a pair $[U, T]$ where $U$ is an RCT of shape $\alpha$ and $T$ is a tableau of shape $\lambda^t$.

\end{proof}

Figure \ref{fig-LRexample} gives an example of the bijection $\rho$ given in the proof of Theorem \ref{LRrule}.

\begin{figure}[ht]
\begin{center}
\[
\begin{array}{c}
\begin{pmatrix}
\;\tableau{  1 \\ 4 & 3 & 2 \\ 5 & 4 \\ 5 & 3 }  & , &
\tableau{  4 & 3 & 2 & 1 \\ 4 & 3 \\ 2} \; \vspace{6pt}\\ U & & V
\end{pmatrix} 
\qquad \stackrel{RSK}{\Leftrightarrow}
\vspace{12pt} \\
\begin{CD}
\begin{pmatrix} \;
\tableau{  1 \\ 4 & 3 & 2 \\ 5 & 4 \\ 5 & 3  } & , &
\begin{picture}(160,50)
$
\begin{pmatrix}
4 & 4 & 4 & 3 & 3 & 2 & 1 \\
2 & 4 & 4 & 3 & 3 & 2 & 1
\end{pmatrix} 
$
\end{picture} 
\vspace{6pt} \\  U & & (T, T_\lambda)
\end{pmatrix} @>\rho>>
\begin{pmatrix} \;
\tableau{
   1 \\
   3 & 2 \\
   4 & 3 & 2 \\
   4  \\ 
   5 & 4 & 3 & 2 & 1\\
   5 & 4 & 3   
} & , &
\begin{picture}(120,50)(230,70)
\put(245,72){\sqone}
\put(245,36){\sqthree}
\put(245,0){\sqtwo}
\put(245,-18){\sqtwo}
\put(222,72){\tableau{ \bas{\infty} &\infty \\
    \bas{\infty} & 4& 3  \\
     \bas{\infty} & \infty & \infty & \infty  \\ 
    \bas{\infty} & 4  \\
    \bas{\infty} & \infty & \infty &4&2&1 \\ 
    \bas{\infty} & \infty & \infty  & 3
}}
\end{picture}
\vspace{6pt} \\  V & & S
\end{pmatrix} 
\end{CD} 
\end{array}
\]
\caption{Example for a term of $\rsqschur_{(1,3,2,2)}\cdot s_{(3,2,1,1)}$}
\label{fig-LRexample}
\end{center}
\end{figure}

\subsection*{Acknowledgments} Thanks to Monica Vazirani and Sarah Mason for many helpful conversations.

\def\cprime{$'$}

\end{document}